\def\hook{{\mathchoice{\vrule height 0pt depth 0.4pt width 3pt
\vrule height 5pt depth 0.4pt \kern 3pt} {\vrule height 0pt depth
0.4pt width 3pt \vrule height 5pt depth 0.4pt \kern 3pt} {\vrule
height 0pt depth 0.2pt width 1.5pt \vrule height 3pt depth 0.2pt
width 0.2pt \kern 1pt} {\vrule height 0pt depth 0.2pt width 1.5pt
\vrule height 3pt depth 0.2pt width 0.2pt \kern 1pt} }}
\def\d{\mbox{d}}
\theoremstyle{plain}
\newtheorem{thm}{Theorem}[section]
\newtheorem{lem}[thm]{Lemma}
\newtheorem{propn}[thm]{Proposition}
\newtheorem{cor}[thm]{Corollary}
\theoremstyle{definition}
\newtheorem{defn}[thm]{Definition}
\newtheorem{xmpl}{Example}
\newcommand{\hnabla}{\hat{\nabla}}
\newcommand{\Sp}{\text{Sp}} 
\begin{document}

\title{The inverse problem
in the calculus of variations: new developments}

\author{Thoan Do and Geoff Prince
\thanks{Email: {\tt dtkthoan\char64 ctu.edu.vn,
g.prince\char64 latrobe.edu.au}}
\\Department of Mathematics and Statistics, La Trobe University,\\ Victoria 3086,
Australia.}
\date{November 21, 2019}

\maketitle

\begin{quote}
{\bf Abstract.} {\small  We deal with the problem of determining the existence and uniqueness of Lagrangians for systems of $n$ second order ordinary differential equations.
A number of recent theorems are presented, using exterior differential systems theory (EDS).
In particular, we indicate how to generalise Jesse Douglas's famous solution for $n=2$.
We then examine a  new class of solutions in arbitrary dimension $n$ and give some non-trivial examples in dimension 3.}
\end{quote}

\section*{Olga Rossi and the Ostrava Seminar}
It has been a great privilege to have worked with Olga Rossi over many years. In addition to being an outstanding mathematician and academic,  Olga was a remarkably generous and warm individual.
We have both enjoyed the hospitality of the Department at the University of Ostrava under her leadership and the seminars have always been a highlight of our visits.
We thank Pasha Zusmanovich for his stewardship of the seminar series and for his invitation to make this contribution.

\section{The inverse problem in the calculus of variations}\label{Section 1}
The inverse problem in the calculus of variations involves deciding
whether
the solutions of a given system of second-order ordinary differential
equations (SODEs)
$$
\ddot x^a = F^a (t, x^b \dot x^b), \ \ a, b = 1, \dots, n
$$
are the solutions of a set of Euler-Lagrange equations
$$
\frac{\partial^2L}{\partial \dot x^a \partial \dot x^b}
\ddot x^b + \frac{\partial^2L}{\partial x^b \partial \dot x^a} \dot x^b
+ \frac{\partial^2 L}{\partial t \partial \dot x^a} =
\frac{\partial L}{\partial x^a}
$$
for some Lagrangian function $L(t, x^b, \dot x^b)$.
Clearly the Hessian matrix $\frac{\partial^2L}{\partial \dot x^a \partial \dot x^b}$ should be invertible on some domain.
The problem dates to the end of the $19^{\text{th}}$ century and it still has
deep importance for mathematics and mathematical physics (see~\cite{KP08,HS88}).

Because the Euler-Lagrange equations are not generally in normal form, the
problem is to find a so-called multiplier matrix
 $g_{ab} (t, x^c, \dot x^c)$ which is invertible on some domain and
such that
$$
g_{ab} (\ddot x^b - F^b) \equiv \frac{d}{dt} \left(\frac{\partial L}{
\partial \dot x^a}\right) - \frac{\partial L}{\partial \dot x^a}.
$$

The most commonly used set of necessary and sufficient conditions for
the existence of the $g_{ab}$ are the so--called {\it Helmholtz
conditions} due to Douglas~\cite{D41} and put in the following form by
Sarlet~\cite{S82}:
$$
g_{ab} = g_{ba}, \quad \Gamma(g_{ab}) =
g_{ac}\Gamma_b^c+g_{bc}\Gamma_a^c, \quad
g_{ac}\Phi_b^c  = g_{bc}\Phi_a^c, \quad \frac{\partial g_{ab}}{\partial\dot x^c}  =
\frac{\partial g_{ac}}{\partial\dot x^b},
$$
where
$$
\Gamma_b^a := -{\frac{1}{2}}\frac{\partial F^a}{\partial \dot x^b},
\quad \Phi_b^a := -\frac{\partial F^a}{\partial x^b} -
\Gamma_b^c\Gamma_c^a - \Gamma(\Gamma_b^a),$$
and where
$$\Gamma := \frac{\partial}{\partial t} + u^a \frac{\partial}{\partial
x^a} + F^a \frac{\partial}{\partial u^a}.$$
When a solution $g_{ab}$ exists a corresponding Lagrangian is recovered from
$ \frac{\partial^2L}{\partial \dot x^a \partial \dot x^b}= g_{ab}.$

A full review of our perspective on the  inverse problem as at 2008 and the role of exterior differential system theory (EDS) can be found in the
article by Krupkov\'{a} and Prince ~\cite{KP08} which includes reference to other approaches. A full account of the latest developments by the current authors can be found in~\cite{Do16,DP16}.

\subsection{Timeline}

There have been too many books and papers written about this inverse problem for us to list. Instead, we offer a brief time-line of milestones in the development of our particular approach.

\begin{itemize}
\item[1886] Sonin solves the inverse problem for one equation ($n=1$)~\cite{Son}
\item[1887] Helmholtz states the problem~\cite{HH01}
\item[1898] Hirsch states the problem~\cite{Hirsch02}
\item[1941] Douglas solves the inverse problem for $n=2$~\cite{D41}
\item[1982] Henneaux \& Shepley propose an algorithm for solving the general inverse problem, identify quantum mechanical  difficulties~\cite{H82, HS88}
\item[1982] Sarlet reformulates the Helmholtz conditions~\cite{S82}
\item[1984] Crampin, Prince, Thompson geometrise the problem~\cite{CPT84}
\item[1990] Morandi et al develop the geometric framework ~\cite{MFLMR90}
\item[1992] Anderson \& Thompson apply the EDS technique and solve the first arbitrary $n$ subcase~\cite{AT92}
\item[1994] Crampin et al reframe Douglas' $n=2$ analysis  in geometric terms~\cite{CSMBP94}
\item[1994] Massa and Pagani introduce their linear connection for SODEs~\cite{MaPa94}
\item[1999] Crampin, Prince, Sarlet \& Thompson solve more arbitrary $n$ cases~\cite{STP}
\item[2003] Aldridge applies EDS to Douglas $n=2$ and some arbitrary $n$~\cite{Ald03,APST06}
\item[2016] Do and Prince identify the classification structure for arbitrary $n$ and apply it to $n=3$, 75 years after Douglas~\cite{Do16,DP16}
\end{itemize}

\section{Geometric formulation and EDS}

We will provide only enough of the geometric setting of the inverse problem to make the later discussion viable; more complete
descriptions and further references can be found in \cite{APST06, JP01,KP08}.

\subsection{$\mathbf{2^{nd}}$ order o.d.e's}
\label{SODES}

\noindent Suppose that $M$ is some differentiable manifold with
generic local co-ordinates $(x^a)$.  The {\em  evolution space} is
defined as $E:= \mathbb R \times TM$, with projection onto the
first factor being denoted by $t: E \to \mathbb R$ and bundle
projection $\pi: E \to \mathbb R \times M$. $E$ has adapted
co-ordinates $(t, x^a, u^a)$ associated with $t$ and $(x^a)$.

A system of second order differential equations with local expression
$$
\ddot x^a = F^a (t, x^b, \dot x^b), \ a, b, = 1, \dots, n
$$
is associated with a smooth vector field $\Gamma$ on $E$ given in
the same co-ordinates by
$$
\Gamma := \frac{\partial}{\partial t} + u^a
\frac{\partial}{\partial x^a} + F^a \frac{\partial}{\partial u^a}.
$$
$\Gamma$ is called a {\em  second order differential equation
field} or {\sc  SODE}. It  can be thought of as the total
derivative operator associated with the differential equations.
The integral curves of $\Gamma$ are just the parametrised and
lifted solution curves of the differential equations.
When the system admits a Lagrangian as described in section 1, $\Gamma$
is called {\em the Euler-Lagrange field.}

The evolution space $E$ is equipped with the {\em vertical
endomorphism} $S$, defined locally by $S:=V_a\otimes \theta^a$
(see~\cite{CPT84} for an intrinsic characterisation). $S$
combines the {\em contact structure} and {\em vertical
sub--bundle}, $V(E)$, of $E$, $\theta^a$ being the local contact
forms $\theta^a:=\d x^a - u^a \d t$ and $V_a:=\frac{\partial}{\partial
u^a}$ forming a basis for vector fields tangent to the fibres of
$\pi:E \to \mathbb R \times M$ (the vertical sub--bundle).

It is natural to study the deformation of $S$ produced by the flow
of $\Gamma$, $\mathcal{ L}_\Gamma S$. The eigenspaces of this
$(1,1)$ tensor field produce a direct sum decomposition of each
tangent space of $E$.  It is shown in~\cite{CPT84} that $\mathcal{
L}_\Gamma S$ (acting on vectors) has eigenvalues $0, +1$ and $-1$.
The eigenspace at a point of $E$ corresponding to the eigenvalue
$0$ is spanned by $\Gamma$, while the eigenspace corresponding to
$+1$ is the {\it vertical subspace} of the tangent space.  The
remaining eigenspace (of dimension $n$) is called the {\it
horizontal subspace}. Unlike the vertical subspaces these
eigenspaces are not integrable; their failure to be so is due to
the curvature of this nonlinear connection (induced by $\Gamma$)
which itself has components
$$
\Gamma^a_b:=-\frac{1}{ 2} \frac{\partial F^b}{\partial u^a}.
$$
The most useful basis for the horizontal eigenspaces has elements
with local expression
$$
H_a := \frac{\partial}{\partial x^a} -\Gamma^b_a\frac{\partial}{\partial u^b}
$$
so that a local basis of vector fields for the direct sum
decomposition of the tangent spaces of $E$ is $\{\Gamma, H_a,
V_a\}$ with corresponding dual basis $\{ \d t, \theta^a, \psi^a\} $
where
$$
\psi^a := \d u^a - F^a \d t + \Gamma^a_b \theta^b.
$$

The components of the curvature appear in the commutators of the horizontal fields:
$$
[H_a,H_b]=R^d_{ab} V_d
$$
where
$$
R^d_{ab}:=\frac{1}{2}\left (\frac{\partial^2 F^d}{\partial
x^a\partial u^b} - \frac{\partial^2 F^d}{\partial x^b\partial u^a}
+\frac{1}{2}\left (\frac{\partial F^c}{\partial
u^a}\frac{\partial^2 F^d}{\partial u^c\partial u^b} -
\frac{\partial F^c}{\partial u^b}\frac{\partial^2 F^d}{\partial
u^c\partial u^a} \right )\right ).
$$
In our chosen basis the curvature tensor is
$$R=R^d_{ab}\theta^a\wedge \theta^b\otimes V_d$$

It will be useful to have some other commutators:
$$
[H_a, V_b]=-\frac{1}{2}(\frac{\partial^2 F^c}{\partial u^a\partial u^b})V_c=V_b(\Gamma^c_a)V_c=V_a(\Gamma^c_b)V_c=[H_b, V_a],
$$
$$
[\Gamma, H_a]=\Gamma^b_aH_b + \Phi^b_aV_b, \qquad [\Gamma,
V_a]=-H_a + \Gamma^b_aV_b,
$$
and, of course, $[V_a,V_b]=0$.

Denoting the projectors defined by the $\mathcal{L}_\Gamma S$-induced direct sum decomposition as ${P_\Gamma, P_V} $ and $P_H$, the {\em Jacobi endomorphism}, $\Phi$, is
$$
\Phi=P_V\circ\mathcal{L}_\Gamma P_H=\Phi^a_b V_a\otimes\theta^b.
$$
The normal forms  of the component matrix $\mathbf \Phi=(\Phi^a_b)$, of $\Phi$ are fundamental to the analysis of the inverse problem. While the (1,1) tensor  $\Phi$ itself clearly has no real eigenspaces, the closely related {\em Shape Map}, $A_\Gamma$ \cite{JP01}, captures the real eigenspaces of  $\Phi^a_b$:
$$A_\Gamma = -\Phi - P_H \circ\mathcal{L}_\Gamma P_V= -\Phi^a_bV_a\otimes\theta^b+H_a\otimes\psi^a$$
and
$$A_\Gamma(X)=\mu X \iff \mu^2\theta^a(X)=-\Phi^a_b\theta^b(X) \ \text{and}\ \psi^a(X)=\mu\theta^a(X).$$
In what follows we will denote by $X^{V/H}$ the vertical, respectively horizontal, copies of eigenvectors $X^a$ of $\Phi^a_b$ belonging to $\mu^2$:
$$X^V:=X^a V_a\quad X^H:=X^a H_a$$
so that $X^H+\mu X^V$ belongs to the corresponding eigenvalue $\mu$ of $A_\Gamma$. Similarly for the eigenforms $\phi^{V/H}$.

Note: In a more complete presentation mathematical framework for the inverse problem we would also introduce the Massa and Pagani connection~\cite{MaPa94}, the shape map~\cite{JP01} and the (jet bundle) calculus along the projection~\cite{CMS96}. For an extensive review see~\cite{KP08}.

\subsection{The Helmholtz conditions}

The Helmholtz conditions given in section 1 are the necessary and sufficient conditions that
a two form $g_{ab}\psi^a\wedge \theta^b$ be closed and of maximal rank on some domain.
This can be given an even more geometric framing in the following theorem from \cite{CPT84}:

\begin{thm}\label{CPT}
Given a {\sc SODE} $\Gamma$, the necessary and sufficient conditions for there
to be Lagrangian for which $\Gamma$ is the Euler--Lagrange field is that
there should exist a 2--form $\Omega$ such that
\begin{align*}
&\Omega(V_1,V_2)=0, \quad \forall\ V_1, V_2 \in V(E)\\
&\Gamma \hook \Omega = 0\\
&\d \Omega = 0\\
&\Omega \ \text{is of maximal rank}.
\end{align*}
\end{thm}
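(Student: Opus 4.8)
The plan is to establish the equivalence between the existence of a multiplier matrix $g_{ab}$ satisfying the Helmholtz conditions and the existence of a 2--form $\Omega$ with the four stated properties, by exhibiting an explicit correspondence between the two objects. The natural candidate, as foreshadowed in the text preceding the theorem, is
$$
\Omega = g_{ab}\,\psi^a\wedge\theta^b,
$$
and the bulk of the argument is to verify that each of the four conditions on $\Omega$ translates, in the adapted coframe $\{\d t,\theta^a,\psi^a\}$, into one of the Helmholtz conditions on $g_{ab}$ (together with the observation that $\Gamma$ is then forced to be the Euler--Lagrange field of the recovered $L$).

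First I would argue the forward direction. Assume $\Gamma$ is the Euler--Lagrange field of a Lagrangian $L$. Then the Poincar\'e--Cartan 2--form $\Omega_L := \d\theta_L$, where $\theta_L := \frac{\partial L}{\partial u^a}\theta^a + L\,\d t$, is closed (indeed exact), and a direct computation in adapted coordinates shows $\Omega_L = g_{ab}\psi^a\wedge\theta^b$ with $g_{ab} = \frac{\partial^2 L}{\partial u^a\partial u^b}$; hence it vanishes on pairs of vertical vectors, it is annihilated by $\Gamma$ (this is exactly the content of the Euler--Lagrange equations written invariantly), and it has maximal rank precisely because the Hessian $g_{ab}$ is nondegenerate. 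For the converse, suppose $\Omega$ satisfies the four conditions. The first condition, $\Omega(V_1,V_2)=0$, kills the $\psi\wedge\psi$ terms, so in the coframe $\Omega$ has the form $\alpha_a\,\d t\wedge\theta^a + \beta_a\,\d t\wedge\psi^a + h_{ab}\,\psi^a\wedge\theta^b + \frac12 k_{ab}\,\theta^a\wedge\theta^b$; the condition $\Gamma\hook\Omega = 0$ then forces $\alpha_a = \beta_a = 0$ and (using $\d t(\Gamma)=1$, $\theta^a(\Gamma)=\psi^a(\Gamma)=0$) eliminates the $\d t$ terms, reducing $\Omega$ to $h_{ab}\psi^a\wedge\theta^b + \frac12 k_{ab}\theta^a\wedge\theta^b$. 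Imposing $\d\Omega=0$ and expanding in the coframe using the commutator relations and the structure equations for $\d\theta^a$, $\d\psi^a$ recorded in Section~\ref{SODES} yields: symmetry $h_{ab}=h_{ba}$ and $k_{ab}=0$ from the $\theta\wedge\theta\wedge\psi$ and curvature terms, the symmetry $V_c(h_{ab}) = V_b(h_{ac})$ from the $\psi\wedge\psi\wedge\theta$ part, the transport equation $\Gamma(h_{ab}) = h_{ac}\Gamma^c_b + h_{bc}\Gamma^c_a$ from the $\d t\wedge\psi\wedge\theta$ part, and the algebraic $\Phi$--condition $h_{ac}\Phi^c_b = h_{bc}\Phi^c_a$ from the $\d t\wedge\theta\wedge\theta$ part. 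These are exactly the four Helmholtz conditions with $g_{ab} := h_{ab}$, and maximal rank of $\Omega$ is equivalent to invertibility of $g_{ab}$. Recovering $L$ from $\frac{\partial^2 L}{\partial u^a\partial u^b} = g_{ab}$ and running the first paragraph's computation backwards shows $\Gamma$ is its Euler--Lagrange field.

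The step I expect to be the main obstacle is the careful expansion of $\d\Omega = 0$ into its components along the basis 3--forms: this requires the exterior derivatives of $\theta^a$ and $\psi^a$, which bring in the nonlinear connection coefficients $\Gamma^a_b$, the curvature $R^d_{ab}$, and the Jacobi endomorphism $\Phi^a_b$, and one must bookkeep these contributions carefully to see the curvature terms cancel (forcing $k_{ab}=0$) and to isolate the $\Phi$--condition cleanly. The remaining verifications --- that $\Gamma\hook\Omega=0$ removes the $\d t$ pieces and that maximal rank matches nondegeneracy of $g_{ab}$ --- are routine linear algebra in the adapted coframe. Since this theorem is quoted from \cite{CPT84}, I would present the correspondence $\Omega \leftrightarrow g_{ab}\psi^a\wedge\theta^b$ and the translation of the four conditions, and refer to \cite{CPT84} for the full component computation rather than reproducing it in detail.
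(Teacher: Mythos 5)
Your proposal is correct and follows essentially the same route as the paper, which itself quotes the theorem from \cite{CPT84} and then simply sets $\Omega = g_{ab}\psi^a\wedge\theta^b$ and reads the Helmholtz conditions off the components of $\d\Omega$ in the adapted coframe $\{\d t,\theta^a,\psi^a\}$, exactly as you describe. Your only slight misattribution is that the symmetry $g_{ab}=g_{ba}$ comes from the $\psi^a\wedge\psi^b\wedge\d t$ component (i.e.\ $\d\Omega(\Gamma,V_a,V_b)=0$) rather than the $\theta\wedge\theta\wedge\psi$ terms, but this does not affect the argument.
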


The simplest way to see how the Helmholtz conditions arise from
theorem~\ref{CPT} is to put ${\Omega := g_{ab} \psi^a \wedge\theta^b}$ and compute $\d \Omega$:

\begin{align*}
\d\Omega & = (\Gamma (g_{ab}) - g_{cb} \Gamma^c_a - g_{ac} \Gamma^c_b) \d t \wedge \psi^a \wedge \theta^b \\
&+ (H_d (g_{ab}) - g_{cb} V_a (\Gamma^c_d))\psi^a\wedge\theta^b \wedge\theta^d \\
&+ V_c (g_{ab}) \psi^c \wedge\psi^a\wedge\theta^b \\
&+ g_{ab} \psi^a\wedge\psi^b \wedge \d t \\
&+ g_{ca} \Phi^c_b\theta^a \wedge\theta^b \wedge \d t \\
&+ g_{ca} H_b (\Gamma^c_d)\theta^a \wedge\theta^b\wedge\theta^d.
\end{align*}

The four Helmholtz conditions are
\begin{alignat*}{2}
&\d\Omega (\Gamma, V_a, V_b) = 0, \qquad \qquad &&\d \Omega (\Gamma, V_a, H_b) = 0, \\
&\d\Omega (\Gamma, H_a, H_b) = 0, \qquad \qquad &&\d \Omega (H_a, V_b, V_c)  = 0.
\end{alignat*}

The remaining conditions arising from $\d\Omega = 0$, namely
\begin{displaymath}
\d \Omega (H_a, H_b, V_c) = 0 \qquad \mbox{and} \qquad \d \Omega(H_a, H_b, H_c) = 0,
\end{displaymath}
can be shown to be derivable from the first four (notice that this last condition is void in dimension 2).

\subsection{The EDS approach}
The 1991 book by Bryant, Chern et al \cite{Bryant91} is a comprehensive reference for exterior differential systems;  in the context of the inverse problem the landmark reference is the 1992 memoir by  Anderson and Thompson~\cite{AT92}.

In exterior differential systems  terms, the inverse problem is

``{\em Find all closed, maximal rank 2-forms in} $\Sigma:=Sp\{ \psi^a
\wedge \theta^b\} \subset\bigwedge^2(E)$''

There are three steps in the EDS process:
\begin{enumerate}
\item[1.] Find the largest differential ideal generated by the submodule $\Sigma$. An algebraic and iterative process.
\item[2.] Create a Pfaffian system from the closure condition on this ideal. A differential process.
\item[3.] Apply the Cartan-K\"{a}hler theorem to determine the generality of
the solution of this Pfaffian system. A somewhat intuitive process!
\end{enumerate}

So we must find all the closed, maximal rank 2-forms
on $E$ of the form
$$
g_{ab}\psi^a \wedge \theta^b,
$$
where we may as well assume that $g_{ab}$ is symmetric.
\noindent So let $\Sigma$ be the submodule of two forms
$\Sp\{\psi^a \wedge \theta^b+ \psi^b \wedge \theta^a\}$, and let $\{\Omega^k\}$ be a subset
of two forms in $\Sigma$. Initially we take $\{\Omega^k: k=1,\dots,n(n+1)/2\}$ to be some basis for $\Sigma.$
Then the  inverse problem becomes that of finding the submodule of closed, maximal
rank two forms in $\Sigma$, i.e. finding functions $r_k$ such that
$\d(r_k \Omega^k)=0$. \textit{Note that $\{\Omega^k\}$ is a working
subset of $\Sigma$ which will shrink as we progress.}

The first EDS step is to find the maximal submodule, $\Sigma'$, of
$\Sigma$ that generates a differential ideal (that is, an ideal
closed under exterior differentiation). We will find (or not) our
closed two forms in this ideal.

We use the following recursive process: starting with the submodule $\Sigma^0 := \Sigma$ and a basis $\{\Omega^k\}$,
find the submodule $\Sigma^1 \subseteq \Sigma^0$ such that $\d\Omega \in \langle\Sigma^0\rangle$ for all non-zero $\Omega \in \Sigma^1$.
That is, find the functions $r_k$ on $E$ such that $\d (r_k \Omega^k) \in \langle\Sigma^0\rangle$
and hence $r_k \d\Omega^k \in \langle\Sigma^0\rangle$. This is an algebraic problem.

Having found these $r_k$ and hence $\Sigma^1$, we check if $\Sigma^1 = \Sigma^0$ and so is already a differential ideal.
If not, we iterate the process, finding the submodule $\Sigma^2\subset \Sigma^1 \subset \Sigma^0$
and so on until at some step, a differential ideal is found or the empty set is reached.
If, at any point during this process, it is not possible to create a maximal rank two form, then the inverse problem has no solution.
That is, if $\{\Omega^1,...,\Omega^d\}$ is a basis for $\Sigma^i$, then
$\wedge^n (\sum_{k=1}^d \Omega^k)$ must be non-zero at each step.
\section{Significant results from the differential ideal step}
As in \cite{DP16} this paper again concentrates on the case where the matrix representation, $\mathbf \Phi=(\Phi^a_b)$, of $\Phi$ is diagonalisable, which corresponds to Douglas cases I, IIa or III (see \cite{CSMBP94} and \cite{STP}).
Our choice of the basis for $\mathfrak{X}(E)$ is $\{\Gamma, X_a^V, X_a^H\}$, where $X_a^V$ and $X_a^H$ are vertical and horizontal copies of eigenvectors $X_a$ of diagonalisable $\mathbf \Phi$ (belonging to eigenvalue $\lambda_a$, possibly repeated but with a distinct label $a$ per repetition).
The corresponding copied eigenforms $\phi^{aV}$ and $\phi^{aH}$, together with $dt$, form the dual basis $\{dt, \phi^{aV}, \phi^{aH}\}.$ While it's not strictly accurate we will call $X_a^{V/H}$ and $\phi^{aV/H}$ eigenvectors and eigenforms of $\Phi.$

So we start the EDS process with the module $\Sigma^0:=Sp\{\omega^{ab}\},$ where $\omega^{ab}:=\frac{1}{2}(\phi^{aV} \wedge \phi^{bH}+ \phi^{bV} \wedge \phi^{aH}),\ 1\le a\le b \le n$, and look for the (final) differential ideal generated by $\Sigma^f$.\\

Then, having found a non-degenerate, closed 2-form  $\omega=\sum_{a \leq b}r_{ab}\omega^{ab} \in \Sigma^f,$ the multiplier $g_{ab}$ is given by
\begin{equation*}\label{conver-r-to-g}
g_{cd}=r_{ab}\phi^a_c\phi^b_d,
\end{equation*}
where $\phi^a_c$ and $\phi^b_d$ are components of eigenforms $\phi^a$ and $\phi^b$ respectively.
In this section we review significant results obtained by applying the first step of exterior differential systems, namely the differential ideal step. See the paper \cite{DP16} for details.

The exterior derivatives of eigenforms $\phi^{aV}$ and $\phi^{aH}$ are:
\begin{align*}
 d\phi^{aV} = & -\tau^{a\Gamma}_b dt \wedge \phi^{bV} - \lambda_a dt \wedge \phi^{aH} + \tau_{cb}^{aH} \phi^{bV} \wedge \phi^{cH} + \tau_{cb}^{aV} \phi^{bV} \wedge \phi^{cV} \\
  \notag            & - \frac{1}{2}\phi^{aV}(R(X_b^H, X_c^H)) \phi^{bH} \wedge \phi^{cH},\\
 d\phi^{aH} = & \ dt \wedge \phi^{aV} - \tau^{a\Gamma}_{b} dt \wedge \phi^{bH} + \tau_{cb}^{aH} \phi^{bH} \wedge \phi^{cH} - \tau_{bc}^{aV} \phi^{bV} \wedge \phi^{cH},
\end{align*}

The structure functions $\tau^{a\Gamma}_b,  \tau_{cb}^{aH}$ and $\tau_{cb}^{aV}$ are defined by these expressions and the curvature tensor is that given in section~\ref{Section 1}.

\begin{propn} \label{identity thm} 
The differential ideal step finishes at $\Sigma^0$ if and only if $\mathbf{\Phi}$ is a function multiple of the identity.
\end{propn}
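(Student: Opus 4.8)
The plan is to compute $d\omega^{ab}$ explicitly using the structure equations for $d\phi^{cV}$ and $d\phi^{cH}$ given above, and then to read off exactly when each $d\omega^{ab}$ lies in $\Sigma^0 = \mathrm{Sp}\{\omega^{cd}\}$. By definition the differential ideal step finishes at $\Sigma^0$ if and only if $\langle\Sigma^0\rangle$ is already a differential ideal, i.e. $d\omega^{ab} \in \langle\Sigma^0\rangle$ for every $a\le b$. So the whole proposition reduces to identifying the obstruction terms in $d\omega^{ab}$ and showing they vanish for all $a,b$ precisely when $\mathbf\Phi = f\,\mathrm{Id}$ for some function $f$ on $E$.

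First I would substitute the two displayed formulas for $d\phi^{aV}$, $d\phi^{aH}$ into $\omega^{ab} = \frac12(\phi^{aV}\wedge\phi^{bH} + \phi^{bV}\wedge\phi^{aH})$ via the Leibniz rule. The terms that obviously stay in the ideal are the ones already containing a factor $\phi^{cV}\wedge\phi^{dH}$ (these recombine into multiples of $\omega^{cd}$ modulo the $dt\wedge\cdot$ pieces, which also lie in $\langle\Sigma^0\rangle$). The dangerous terms are those with no "mixed" $V\wedge H$ factor available to match an $\omega^{cd}$: specifically the $dt\wedge\phi^{aH}\wedge\phi^{bH}$-type terms coming from the $-\lambda_a\,dt\wedge\phi^{aH}$ piece of $d\phi^{aV}$, and the purely horizontal curvature term $-\tfrac12\phi^{aV}(R(X_c^H,X_d^H))\,\phi^{cH}\wedge\phi^{dH}$ wedged against $\phi^{bH}$. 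The key observation is that when we form $d\omega^{ab}$ the leading obstruction is proportional to $(\lambda_a - \lambda_b)\,dt\wedge\phi^{aH}\wedge\phi^{bH}$: the two contributions from $d\phi^{aV}\wedge\phi^{bH}$ and $\phi^{aV}\wedge d\phi^{bH}$ (and the symmetrised partners) combine with a relative sign governed by the eigenvalues. Since $dt\wedge\phi^{aH}\wedge\phi^{bH}$ is not in $\langle\Sigma^0\rangle$ (it has no $\phi^{cV}$ factor), this term must vanish identically, forcing $\lambda_a = \lambda_b$ for all $a,b$ — that is, $\mathbf\Phi$ has a single repeated eigenvalue, hence (being diagonalisable) $\mathbf\Phi = f\,\mathrm{Id}$.

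For the converse, I would check that when $\lambda_a = f$ for all $a$ the remaining would-be obstructions also disappear: with all eigenvalues equal the curvature term $\phi^{aV}(R(X_c^H,X_d^H))$ is constrained (the horizontal distribution's curvature becomes compatible with the now-degenerate $\Phi$), and one verifies directly that every surviving term in $d\omega^{ab}$ is a combination of $dt\wedge\omega^{cd}$, $\phi^{cV}\wedge\omega^{de}$-type terms and $\phi^{cH}\wedge\omega^{de}$-type terms, all of which sit in $\langle\Sigma^0\rangle$. So $\Sigma^0$ is already a differential ideal and the process terminates immediately.

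The main obstacle I anticipate is bookkeeping rather than conceptual: correctly tracking the symmetrisation in $\omega^{ab}$ and the index gymnastics among the $\tau$-structure functions so that the coefficient of the critical $dt\wedge\phi^{aH}\wedge\phi^{bH}$ term is pinned down as $\lambda_a-\lambda_b$ and not something that could vanish for spurious reasons. A secondary subtlety is the converse direction: one must confirm that \emph{no other} component of $d\omega^{ab}$ produces an out-of-ideal term once the eigenvalues coincide — in particular that the purely horizontal curvature piece is genuinely absorbed, which may require invoking a Bianchi-type identity for $R$ or the relation between $R$ and $\Phi$ implicit in the commutator formulas of Section~\ref{SODES}.
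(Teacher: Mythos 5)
Your overall strategy is exactly the one the paper (via \cite{DP16}) uses: expand $d\omega^{ab}$ with the displayed structure equations for $d\phi^{aV}$, $d\phi^{aH}$ and isolate the terms that cannot lie in $\langle\Sigma^0\rangle$. Your identification of the decisive obstruction is also correct: the $-\lambda_a\,dt\wedge\phi^{aH}$ piece of $d\phi^{aV}$ and its partner from $d\phi^{bV}$ produce $\tfrac12(\lambda_b-\lambda_a)\,dt\wedge\phi^{aH}\wedge\phi^{bH}$ in $d\omega^{ab}$ (note that the two relevant contributions come from $d\phi^{aV}\wedge\phi^{bH}$ and $d\phi^{bV}\wedge\phi^{aH}$, not from $\phi^{aV}\wedge d\phi^{bH}$, which only yields terms carrying a vertical factor); since every monomial of a form in $\langle\Sigma^0\rangle$ contains some $\phi^{cV}$, this forces $\lambda_a=\lambda_b$ for all $a,b$, hence $\mathbf\Phi=\lambda I$ in the diagonalisable setting of this section.

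Two steps need more care than your sketch allows. First, ``terms containing a factor $\phi^{cV}\wedge\phi^{dH}$ obviously stay in the ideal'' is false as stated: $\langle\Sigma^0\rangle$ is generated by the \emph{symmetrised} products $\omega^{cd}$, so $\beta\wedge\phi^{cV}\wedge\phi^{dH}$ lies in the ideal only when the coefficients assemble symmetrically in $(c,d)$, and the three-index terms $\phi^{bV}\wedge\phi^{cV}\wedge\phi^{dH}$ are \emph{not} all reachable from the generators $\phi^{eV}\wedge\omega^{cd}$ (for instance $\phi^{1V}\wedge\omega^{23}+\phi^{2V}\wedge\omega^{31}+\phi^{3V}\wedge\omega^{12}=0$, so these span a proper subspace). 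This is precisely where the nontrivial conditions of Propositions \ref{di-sigma-0} and \ref{di-sigma-1} come from, so it cannot be waved through. For your converse the clean fix is to observe that when $\mathbf\Phi=\lambda I$ every frame is an eigenframe, so one may take $\phi^{aV}=\psi^a$, $\phi^{aH}=\theta^a$; then the $\omega^{ab}$ have constant coefficients, the $\phi^{V}\wedge\phi^{V}$ structure functions vanish, and the surviving mixed terms visibly have symmetric coefficients. Second, the remaining obstruction in the converse is the purely horizontal term $g_{ca}H_b(\Gamma^c_d)\,\theta^a\wedge\theta^b\wedge\theta^d$; it vanishes exactly as you anticipate, because only the part of $H_b(\Gamma^c_d)$ antisymmetric in $(b,d)$ survives total alternation, that part is (up to sign) $\tfrac12 R^c_{bd}$ by $[H_a,H_b]=R^d_{ab}V_d$, and the standard identity $R^a_{bc}=\tfrac13\bigl(V_b(\Phi^a_c)-V_c(\Phi^a_b)\bigr)$ with $\Phi^a_b=\lambda\delta^a_b$ reduces it to an expression killed by the symmetry of $g_{ab}$. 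With these two repairs your argument is complete and coincides with the paper's.
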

{\it In the remainder of this section we assume that $\Phi$ is diagonalisable with distinct eigenvalues.}
\begin{propn}\label{di-sigma-0}\cite{DP16}
Suppose that $\mathbf \Phi$ is diagonalisable with distinct eigenvalues and eigenforms $\phi^a.$ Take $\Sigma^0=Sp\{\omega^{ab}\}$ and $\omega \in \Sigma^0$. 
Then  $\omega \in \Sigma^1$ if and only if $\omega:=\sum_{d=1}^n r_d \omega^{dd}$ and the curvature satisfies
\begin{equation}\label{curvature-condition-Sigma-1}
\sum_{\text{cyclic } abc} r_{a}\phi^{aV}(R(X_b^H, X_c^H))=0,\quad \text{for all distinct $a,b,c$, (no sum on $a$).}
\end{equation}
\end{propn}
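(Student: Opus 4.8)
The plan is to compute $d\omega$ for a general $\omega = \sum_{a\le b} r_{ab}\omega^{ab} \in \Sigma^0$ and to extract, term by term, the conditions for $d\omega$ to lie in the algebraic ideal $\langle\Sigma^0\rangle = \langle\phi^{aV}\wedge\phi^{bH}+\phi^{bV}\wedge\phi^{aH}\rangle$. Since $\Sigma^0$ is spanned by wedges of one vertical and one horizontal eigenform, a 3-form lies in $\langle\Sigma^0\rangle$ precisely when it contains no ``pure'' terms $\phi^{aV}\wedge\phi^{bV}\wedge\phi^{cV}$, no $\phi^{aH}\wedge\phi^{bH}\wedge\phi^{cH}$, and, more subtly, when its mixed terms assemble into the symmetric combinations $\phi^{(aV}\wedge\phi^{b)H}\wedge(\cdots)$. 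So the first step is to substitute the given formulas for $d\phi^{aV}$ and $d\phi^{aH}$ into $d\omega = \sum_{a\le b}\big(dr_{ab}\wedge\omega^{ab} + r_{ab}\,d\omega^{ab}\big)$ and organise the output by form-type.

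The key reduction is the off-diagonal vanishing $r_{ab}=0$ for $a\neq b$. Here one uses that the eigenvalues $\lambda_a$ are distinct: the terms $-\lambda_a\,dt\wedge\phi^{aH}$ in $d\phi^{aV}$ (and the reciprocal $dt\wedge\phi^{aV}$ in $d\phi^{aH}$) feed into $d\omega^{ab}$ a piece proportional to $(\lambda_a-\lambda_b)\,dt\wedge\phi^{aH}\wedge\phi^{bH}$, which cannot be cancelled against anything in $\langle\Sigma^0\rangle$ (that ideal contains no $dt$-free, purely horizontal 2-form, and the $dt$-containing generators pair $dt$ with a single $\phi^{cV}\wedge\phi^{dH}$, not $\phi^{aH}\wedge\phi^{bH}$). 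Hence $(\lambda_a-\lambda_b)r_{ab}=0$, forcing $r_{ab}=0$ whenever $a\neq b$, and $\omega$ reduces to $\sum_d r_d\,\omega^{dd}$ with $r_d:=r_{dd}$. This also explains why Proposition~\ref{identity thm} is the degenerate case: when $\mathbf\Phi$ is a function multiple of the identity the distinctness argument is unavailable and $\Sigma^1=\Sigma^0$.

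With $\omega=\sum_d r_d\,\omega^{dd}$ in hand, the second step is to recompute $d\omega$ and isolate the genuinely obstructing terms. The $dr_d$ contributions, the $dt$-terms, the $\tau^{aH}$- and $\tau^{aV}$-terms all turn out either to land in $\langle\Sigma^0\rangle$ automatically or to be constrained by conditions that are already consequences of later Helmholtz conditions and so need not be imposed at this stage (this is the ``somewhat intuitive'' bookkeeping the EDS recipe warns about; the careful accounting is in \cite{DP16}). The one term with no escape route is the purely horizontal piece coming from the last term of $d\phi^{dV}$: it contributes $-\tfrac12\sum_d r_d\,\phi^{dV}(R(X_b^H,X_c^H))\,\phi^{bH}\wedge\phi^{cH}$ inside $\phi^{dV}\wedge(\cdots)$, wedged against the $\phi^{dH}$ that partners $\phi^{dV}$ in $\omega^{dd}$. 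Collecting the coefficient of each independent $\phi^{aH}\wedge\phi^{bH}\wedge\phi^{cH}$ (for fixed distinct $a,b,c$) and using the antisymmetry $R(X_b^H,X_c^H)=-R(X_c^H,X_b^H)$ to symmetrise produces exactly the cyclic sum $\sum_{\mathrm{cyclic}\ abc} r_a\,\phi^{aV}(R(X_b^H,X_c^H))=0$ of \eqref{curvature-condition-Sigma-1}. Conversely, if this holds then every obstructing term vanishes and $d\omega\in\langle\Sigma^0\rangle$, giving the ``if'' direction.

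The main obstacle is not any single computation but the second step's bookkeeping: one must be certain that every mixed 3-form term (those involving both $\phi^{\cdot V}$ and $\phi^{\cdot H}$ factors, e.g. the $\tau^{aV}_{bc}$ and $\tau^{aH}_{cb}$ terms, and the $dr_d$ terms) really does lie in $\langle\Sigma^0\rangle$ modulo conditions deferred to subsequent EDS steps, so that the \emph{only} new constraint at $\Sigma^1$ is the curvature identity. Getting the index symmetrisations right in that verification — in particular recognising which seemingly extra conditions are void or redundant — is the delicate part; the purely horizontal term that yields \eqref{curvature-condition-Sigma-1} is by comparison straightforward once the off-diagonal $r_{ab}$ have been killed.
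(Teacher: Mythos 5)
The paper itself gives no proof of this proposition --- it is quoted from \cite{DP16} --- so your proposal can only be measured against the computation that the paper's displayed formulas for $d\phi^{aV}$ and $d\phi^{aH}$ make implicit. Your two main moves are the right ones and match that computation: the coefficient of $dt\wedge\phi^{aH}\wedge\phi^{bH}$ in $d\omega$ is $\tfrac12(\lambda_b-\lambda_a)r_{ab}$, no element of $\langle\Sigma^0\rangle$ has a $dt\wedge\phi^{H}\wedge\phi^{H}$ component (each generator carries exactly one vertical factor), so distinctness of the eigenvalues kills the off-diagonal $r_{ab}$; and the purely horizontal part of $d\omega$, which likewise cannot meet $\langle\Sigma^0\rangle$, has as its coefficient of $\phi^{aH}\wedge\phi^{bH}\wedge\phi^{cH}$ precisely the cyclic sum in \eqref{curvature-condition-Sigma-1}.

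The gap is in your treatment of the mixed terms, and the justification you offer for skipping it is not the correct mechanism. Those terms are not ``constrained by conditions that are consequences of later Helmholtz conditions'' nor ``deferred to subsequent EDS steps'': for the stated equivalence to hold, every $VVH$ and $VHH$ term of $d\bigl(\sum_d r_d\omega^{dd}\bigr)$ must lie in the algebraic ideal $\langle\Sigma^0\rangle$ \emph{identically}, with no condition imposed at all. This is true, but it is exactly the point that needs checking. The reason it works is that $\langle\Sigma^0\rangle$ still contains the off-diagonal generators $\omega^{ab}$ --- which is also why the $\tau^{a\Gamma}_b$ and $\tau^{aV/H}_{bc}$ terms are absorbed here yet resurface as the first three conditions of \eqref{di-sigma-1-condition} once one passes to the smaller ideal $\langle\tilde\Sigma^1\rangle$. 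Even so, absorption is not automatic: for a fixed triple of distinct indices $p,q,r$, the three basis $3$-forms $\phi^{qV}\wedge\phi^{rV}\wedge\phi^{pH}$, $\phi^{pV}\wedge\phi^{rV}\wedge\phi^{qH}$, $\phi^{pV}\wedge\phi^{qV}\wedge\phi^{rH}$ meet $\langle\Sigma^0\rangle$ only in a $2$-dimensional subspace, so there is one potential obstruction per triple, namely the alternating combination $T_1-T_2+T_3$ of the corresponding coefficients. One must extract these coefficients from the $\tau^{dV}_{cb}$ terms of $d\phi^{dV}\wedge\phi^{dH}-\phi^{dV}\wedge d\phi^{dH}$ and verify that $T_1-T_2+T_3$ vanishes identically (it does: the $r_p$, $r_q$ and $r_r$ contributions each cancel separately), and similarly for the $VHH$ block. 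Without this verification your argument does not exclude extra conditions beyond \eqref{curvature-condition-Sigma-1}, which is precisely the content of the ``only if'' direction.
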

As discussed in \cite{DP16}, we introduce $\tilde \Sigma^1:=Sp\{\omega^a:=\omega^{aa}, a=1,\dots,n\}$, not necessarily satisfying \eqref{curvature-condition-Sigma-1}, so that $\Sigma^1\subseteq \tilde\Sigma^1\subset\Sigma^0$.
The results show that for the case where $\mathbf \Phi$ is diagonalisable with distinct eigenvalues, $\tilde \Sigma^1$ is the more effective option to start the differential ideal step. As we will see, this will generate an intermediate sequence of submodules of significant value.
\begin{propn}\label{di-sigma-1}
Let $\mathbf \Phi$ be diagonalisable with distinct eigenvalues. Then the necessary and sufficient conditions for $\omega=\sum_{a} r_a\phi^{aV}\wedge\phi^{aH} \in \tilde \Sigma^1$ to have its exterior derivative in the ideal $\langle\tilde\Sigma^1\rangle$ are that, for all distinct $a,b$ and $c$ (no sum),
\begin{align}
\notag&r_a\tau^{a\Gamma}_b+r_b\tau^{b\Gamma}_a=0, \\
\notag&r_a(\tau^{aV}_{bc}-\tau^{aV}_{cb})-r_b\tau^{bV}_{ca}+r_c\tau^{cV}_{ba}=0,\\
\label{di-sigma-1-condition}&r_a(\tau^{aH}_{bc}-\tau^{aH}_{cb})-r_b\tau^{bH}_{ca}+r_c\tau^{cH}_{ba}=0,\\
\notag&r_a\phi^{aV}(R(X_c^H,X_b^H))+r_b\phi^{bV}(R(X_a^H,X_c^H))+r_c\phi^{cV}(R(X_b^H,X_a^H))=0.
\end{align}
The last of these is just \eqref{curvature-condition-Sigma-1}.
\end{propn}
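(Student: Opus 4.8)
The plan is as follows. Write $\omega^a:=\omega^{aa}=\phi^{aV}\wedge\phi^{aH}$, so that $\tilde\Sigma^1=\Sp\{\omega^1,\dots,\omega^n\}$ and a $3$-form lies in the algebraic ideal $\langle\tilde\Sigma^1\rangle$ exactly when it can be written as $\sum_a\beta_a\wedge\omega^a$ with the $\beta_a$ one-forms. Since $\{dt,\phi^{1V},\dots,\phi^{nV},\phi^{1H},\dots,\phi^{nH}\}$ is a coframe, the first step is to pin down the degree-$3$ part of $\langle\tilde\Sigma^1\rangle$ inside the associated basis of $3$-forms. Because $\phi^{aV}\wedge\omega^a=\phi^{aH}\wedge\omega^a=0$, this subspace is spanned by $dt\wedge\phi^{aV}\wedge\phi^{aH}$ together with $\phi^{bV}\wedge\phi^{aV}\wedge\phi^{aH}$ and $\phi^{bH}\wedge\phi^{aV}\wedge\phi^{aH}$ for $b\ne a$; equivalently, the ``complementary'' basis $3$-forms --- those \emph{not} of this type, and hence the ones on whose coefficients $d\omega\in\langle\tilde\Sigma^1\rangle$ imposes a genuine condition --- are $dt\wedge\phi^{aV}\wedge\phi^{bV}$, $dt\wedge\phi^{aH}\wedge\phi^{bH}$, $dt\wedge\phi^{aV}\wedge\phi^{bH}$ ($a\ne b$), $\phi^{aV}\wedge\phi^{bV}\wedge\phi^{cV}$, $\phi^{aH}\wedge\phi^{bH}\wedge\phi^{cH}$, $\phi^{aV}\wedge\phi^{bV}\wedge\phi^{cH}$ with $c\notin\{a,b\}$, and $\phi^{aV}\wedge\phi^{bH}\wedge\phi^{cH}$ with $a\notin\{b,c\}$.

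Next I would compute
\[
d\omega=\sum_a\bigl(dr_a\wedge\omega^a+r_a\,d\phi^{aV}\wedge\phi^{aH}-r_a\,\phi^{aV}\wedge d\phi^{aH}\bigr).
\]
Expanding $dr_a$ in the coframe shows that each $dr_a\wedge\omega^a$ is already of the form $(\cdot)\wedge\omega^a$, hence lies in $\langle\tilde\Sigma^1\rangle$; this is precisely why the conditions in \eqref{di-sigma-1-condition} turn out to be purely algebraic in the $r_a$. It then remains to substitute the displayed formulas for $d\phi^{aV}$ and $d\phi^{aH}$ into the remaining terms, wedge everything out, and discard every summand that either has a repeated coframe factor or carries a factor $\phi^{aV}\wedge\phi^{aH}$ after reordering. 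The surviving part is supported only on the complementary $3$-forms listed above, and in fact only on four of those types: the triples $dt\wedge\phi^{aV}\wedge\phi^{bV}$, $dt\wedge\phi^{aH}\wedge\phi^{bH}$ and the purely vertical $\phi^{aV}\wedge\phi^{bV}\wedge\phi^{cV}$ receive no contribution at all.

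Finally I would read off the four families of \eqref{di-sigma-1-condition} by collecting coefficients. For $dt\wedge\phi^{aV}\wedge\phi^{bH}$ ($a\ne b$), the contribution $-r_b\tau^{b\Gamma}_a$ from the term $d\phi^{bV}\wedge\phi^{bH}$ and the contribution $-r_a\tau^{a\Gamma}_b$ from the term $-\phi^{aV}\wedge d\phi^{aH}$ add up to $-(r_a\tau^{a\Gamma}_b+r_b\tau^{b\Gamma}_a)$, giving the first line. The coefficient of $\phi^{aV}\wedge\phi^{bV}\wedge\phi^{cH}$ (distinct indices), assembled from the $\tau^{aV}$-terms of $d\phi^{aV}$ and of $d\phi^{aH}$, gives the second line; the coefficient of $\phi^{aV}\wedge\phi^{bH}\wedge\phi^{cH}$ (distinct indices), assembled from the $\tau^{aH}$-terms, gives the third; and the coefficient of the purely horizontal $\phi^{aH}\wedge\phi^{bH}\wedge\phi^{cH}$ (distinct indices), which originates solely from the curvature term of $d\phi^{aV}$ wedged with $\phi^{aH}$, gives the fourth --- which is \eqref{curvature-condition-Sigma-1} once the antisymmetry of $R$ in its two horizontal arguments is used to reconcile the signs. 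Since each of these is an equality at every step, the four families together are equivalent to $d\omega\in\langle\tilde\Sigma^1\rangle$, proving both directions.

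I expect the only real obstacle to be the sign-and-index bookkeeping, especially for the two mixed triples and the curvature triple, where one must combine contributions from both $d\phi^{aV}\wedge\phi^{aH}$ and $-\phi^{aV}\wedge d\phi^{aH}$, antisymmetrise the wedge products correctly, and use the (anti)symmetry of $\tau^{aV}_{bc}$, $\tau^{aH}_{bc}$ and $R$ to bring each coefficient into the form displayed in \eqref{di-sigma-1-condition}. I would keep this manageable by fixing, for each complementary basis $3$-form, the complete list of its source terms before summing, rather than expanding $d\omega$ blindly.
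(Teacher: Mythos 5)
Your proposal is correct and follows essentially the same route as the paper: expand $d\omega=\sum_a dr_a\wedge\omega^a+r_a\,d\phi^{aV}\wedge\phi^{aH}-r_a\,\phi^{aV}\wedge d\phi^{aH}$ using the displayed formulas for $d\phi^{aV}$, $d\phi^{aH}$, discard everything containing a factor $\phi^{aV}\wedge\phi^{aH}$, and set the surviving coefficients (on $dt\wedge\phi^{aV}\wedge\phi^{bH}$, the two mixed triples, and the purely horizontal triple) to zero. This is exactly the decomposition the paper itself displays for $d\omega^a$ at the start of Section 5 (the $\xi^a_a\wedge\omega^a+\xi^a_b\wedge\omega^b+\cdots$ expansion), with the detailed bookkeeping deferred to \cite{DP16}.
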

If these conditions are satisfied for all $r_a$ we have:
\begin{cor}\label{firststep-condition}
For diagonalisable $\mathbf \Phi$ with distinct eigenvalues, the necessary and sufficient conditions for $\tilde \Sigma^1$ to generate a differential ideal are that, for all distinct $a,b$ and $c$,
\begin{equation*}\label{Sigma-1-diff-ideal}
\tau^{a\Gamma}_b=0,\ \tau^{aV}_{bc}=0
\end{equation*}
\end{cor}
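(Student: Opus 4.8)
The plan is to derive the corollary directly from Proposition~\ref{di-sigma-1} by observing that $\tilde\Sigma^1$ generates a differential ideal precisely when \emph{every} $\omega=\sum_a r_a\phi^{aV}\wedge\phi^{aH}\in\tilde\Sigma^1$ has its exterior derivative in $\langle\tilde\Sigma^1\rangle$, i.e.\ when the four displayed conditions in \eqref{di-sigma-1-condition} hold for \emph{all} choices of the coefficients $r_a$, not merely for one privileged choice. So the whole argument is a linear-algebra decoupling: I would treat the $r_a$ as independent parameters and ask which structure functions $\tau^{a\Gamma}_b$, $\tau^{aV}_{bc}$, $\tau^{aH}_{bc}$, and curvature components are forced to vanish.

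First I would handle the first relation, $r_a\tau^{a\Gamma}_b+r_b\tau^{b\Gamma}_a=0$ for all distinct $a,b$. Since this must hold identically in $r_a$ and $r_b$, and the two terms involve distinct, independent coefficients, each must vanish separately: $\tau^{a\Gamma}_b=0$ for all $a\neq b$. Next, for the second relation $r_a(\tau^{aV}_{bc}-\tau^{aV}_{cb})-r_b\tau^{bV}_{ca}+r_c\tau^{cV}_{ba}=0$, the three summands again carry independent coefficients $r_a,r_b,r_c$, so each coefficient bracket vanishes: $\tau^{aV}_{bc}=\tau^{aV}_{cb}$ (from the $r_a$ term) and $\tau^{bV}_{ca}=0$, $\tau^{cV}_{ba}=0$ (from the $r_b$ and $r_c$ terms). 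Relabelling the latter two shows $\tau^{aV}_{bc}=0$ whenever the three indices are distinct; combined with the symmetry just obtained, this disposes of all off-diagonal $\tau^{aV}$. This is exactly the claimed pair of conditions $\tau^{a\Gamma}_b=0$, $\tau^{aV}_{bc}=0$.

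The slightly less obvious point — and the one I would flag as the real content — is that the remaining two conditions in \eqref{di-sigma-1-condition}, namely the $\tau^{aH}$ relation and the curvature relation \eqref{curvature-condition-Sigma-1}, do not need to be listed in the corollary because they follow from $\tau^{a\Gamma}_b=0$ and $\tau^{aV}_{bc}=0$. For the curvature term this should come from re-examining the structure equations for $d\phi^{aV}$ and $d\phi^{aH}$: with the off-diagonal $\tau^{aV}_{bc}$ and $\tau^{a\Gamma}_b$ killed, one can feed the identity $d^2\phi^{aH}=0$ (or the integrability/Bianchi-type identity implicit in $R$ being a curvature) and extract that the surviving curvature components $\phi^{aV}(R(X_b^H,X_c^H))$ must themselves be symmetric/antisymmetric in a way that forces the cyclic sum \eqref{curvature-condition-Sigma-1} to hold automatically, and likewise makes the $\tau^{aH}$ combination vanish. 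I would carry this out by differentiating the reduced form of $d\phi^{aH}$ and reading off the $\phi^{bV}\wedge\phi^{cH}\wedge\phi^{dH}$ and $\phi^{bH}\wedge\phi^{cH}\wedge\phi^{dH}$ components.

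The main obstacle is precisely this last reduction step: showing that the $\tau^{aH}$ and curvature conditions are redundant rather than independent. The decoupling of the $r_a$'s in the first two conditions is essentially immediate, but verifying redundancy of the remaining two requires actually using the $d^2=0$ identities on the eigenforms in the presence of the already-established vanishing, and one must be careful that no genuinely new constraint survives — in particular that the symmetry $\tau^{aV}_{bc}=\tau^{aV}_{cb}$ extracted above, together with $\tau^{aV}_{bc}=0$ for distinct indices, is the full consequence and does not secretly constrain the diagonal-type structure functions that are allowed to remain nonzero. If that check goes through cleanly, the corollary follows; the bulk of the work in \cite{DP16} is in establishing Proposition~\ref{di-sigma-1}, after which the corollary is the short linear-algebra epilogue sketched here.
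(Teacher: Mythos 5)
Your overall strategy is exactly the paper's: the corollary is obtained by demanding that the four relations \eqref{di-sigma-1-condition} of Proposition~\ref{di-sigma-1} hold for \emph{every} choice of the $r_a$ and then decoupling term by term (the paper signals precisely this with ``If these conditions are satisfied for all $r_a$ we have:'' and gives no further argument, deferring the supporting identities to \cite{DP16}). Your extraction of $\tau^{a\Gamma}_b=0$ and $\tau^{aV}_{bc}=0$ for distinct indices from the first two relations is correct, and you have correctly located the real content of the statement: the sufficiency half requires showing that these two families of vanishings force the third and fourth relations as well.

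Two corrections to how you propose to finish. First, be precise about what must be shown: since the third and fourth relations must also hold for \emph{all} $r$, they decouple in the same way, so what is needed is the \emph{individual} vanishing $\tau^{aH}_{bc}=0$ and $\phi^{aV}(R(X_b^H,X_c^H))=0$ for all distinct $a,b,c$ --- ``the cyclic sum \eqref{curvature-condition-Sigma-1} holds automatically'' is strictly weaker than what sufficiency demands. Second, your $d^2=0$ mechanism does close the $\tau^{aH}$ part: the $dt\wedge\phi^{bV}\wedge\phi^{cH}$ component of $d^2\phi^{aH}=0$ yields
\begin{equation*}
\tau^{aH}_{bc}=X_b^V(\tau^{a\Gamma}_c)-\Gamma(\tau^{aV}_{bc})-\tau^{a\Gamma}_d\tau^{dV}_{bc}+\tau^{aV}_{dc}\tau^{d\Gamma}_b+\tau^{aV}_{bd}\tau^{d\Gamma}_c,
\end{equation*}
and for distinct $a,b,c$ every term on the right either is a derivative of a function that vanishes identically under your hypotheses or contains a factor $\tau^{p\Gamma}_q$ ($p\neq q$) or $\tau^{pV}_{qr}$ (distinct indices). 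For the curvature part, however, the efficient route is not a Bianchi-type differentiation but the algebraic identity $3R^d_{bc}=V_b(\Phi^d_c)-V_c(\Phi^d_b)$, which in the eigenbasis gives, for distinct $a,b,c$, $3\,\phi^{aV}(R(X_b^H,X_c^H))=(\lambda_c-\lambda_a)\tau^{aV}_{bc}-(\lambda_b-\lambda_a)\tau^{aV}_{cb}$ (the terms $X_b^V(\lambda_c)\delta^a_c$ drop out precisely because the three indices are distinct); this kills the curvature components with no differentiation at all. Until one of these identities is actually exhibited, the ``sufficient'' direction of the corollary is unproved, so the redundancy claim should be treated as the lemma to be supplied --- it is established in \cite{DP16} --- rather than as a check to be deferred.
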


In the remaining differential ideal steps, we define $\tilde \Sigma^{i+1}:=\{\omega\in \tilde\Sigma^i: d\omega\in \langle\tilde\Sigma^{i}\rangle\}$. Thus $\tilde \Sigma^2$ is the submodule of 2-forms in $\tilde\Sigma^1$ which further satisfy the conditions in \eqref{di-sigma-1-condition} and so $\tilde \Sigma^2 \subseteq \Sigma^1 \subseteq \tilde\Sigma^1$. The relation between the sequences $\tilde \Sigma^1 \supset\tilde \Sigma^2 \supset \dots\supset \tilde \Sigma^p\supset\dots$ and $\Sigma^1\supset\Sigma^2 \supset \dots\supset \Sigma^p\supset\dots$ is as follows.

\begin{lem}\label{relation-Sigma-Sigmatilde}
 $\tilde \Sigma^1 \supseteq \Sigma^1\supseteq\tilde \Sigma^2 \supseteq \Sigma^2 \supseteq \dots\supseteq \tilde \Sigma^p\supseteq\Sigma^p\supseteq\dots.$
\end{lem}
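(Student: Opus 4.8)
The plan is to reduce the whole interleaved chain to one elementary monotonicity property of the operation carrying each submodule to the next, the only non-formal ingredient being Proposition~\ref{di-sigma-0}. For a submodule $U\subseteq\Sigma^0$ write $D(U):=\{\omega\in U:d\omega\in\langle U\rangle\}$, where $\langle U\rangle$ is the algebraic ideal generated by $U$ in $\bigwedge(E)$; this is again a submodule because $d(f\omega)=df\wedge\omega+f\,d\omega$ with $df\wedge\omega$ already in $\langle U\rangle$. By the definitions recalled just above, $\Sigma^{i+1}=D(\Sigma^i)$ and $\tilde\Sigma^{i+1}=D(\tilde\Sigma^i)$, and $D(U)\subseteq U$ always, so each sequence is individually descending. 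The engine of the proof is the \emph{comparison principle}: if $U\subseteq V$ then $D(U)\subseteq D(V)$. This is immediate, since $U\subseteq V$ forces $\langle U\rangle\subseteq\langle V\rangle$, so any $\omega\in D(U)$ has $\omega\in U\subseteq V$ and $d\omega\in\langle U\rangle\subseteq\langle V\rangle$, i.e.\ $\omega\in D(V)$.

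For the base case, Proposition~\ref{di-sigma-0} says that every $\omega\in\Sigma^1$ has the form $\sum_d r_d\,\omega^{dd}$, hence $\Sigma^1\subseteq\Sp\{\omega^{dd}\}=\tilde\Sigma^1$; and $\tilde\Sigma^1\subseteq\Sigma^0$ by construction. Applying $D$ to this last inclusion and invoking the comparison principle gives $\tilde\Sigma^2=D(\tilde\Sigma^1)\subseteq D(\Sigma^0)=\Sigma^1$, so the chain starts correctly: $\tilde\Sigma^1\supseteq\Sigma^1\supseteq\tilde\Sigma^2$. (One can instead read $\tilde\Sigma^2\subseteq\Sigma^1$ off Proposition~\ref{di-sigma-1}, since elements of $\tilde\Sigma^2$ satisfy \eqref{di-sigma-1-condition}, whose last line is precisely \eqref{curvature-condition-Sigma-1}.)

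The induction is then driven entirely by the comparison principle. Assume $\Sigma^p\subseteq\tilde\Sigma^p$ and $\tilde\Sigma^{p+1}\subseteq\Sigma^p$. Applying $D$ to the first inclusion gives $\Sigma^{p+1}=D(\Sigma^p)\subseteq D(\tilde\Sigma^p)=\tilde\Sigma^{p+1}$, and applying $D$ to the second gives $\tilde\Sigma^{p+2}=D(\tilde\Sigma^{p+1})\subseteq D(\Sigma^p)=\Sigma^{p+1}$. Both inclusions thus advance by one index, and concatenating them over all $p\ge1$ produces exactly $\tilde\Sigma^1\supseteq\Sigma^1\supseteq\tilde\Sigma^2\supseteq\Sigma^2\supseteq\cdots$.

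I expect no genuine obstacle: once the two sequences are recognised as iterates of the single monotone operator $D$ applied to the nested pair $\tilde\Sigma^1\subseteq\Sigma^0$, together with the reverse inclusion $\Sigma^1\subseteq\tilde\Sigma^1$ supplied by Proposition~\ref{di-sigma-0}, the argument is a definition chase. The only appearance of the standing hypothesis that $\mathbf\Phi$ is diagonalisable with distinct eigenvalues is inside Proposition~\ref{di-sigma-0}, which is what forces the first EDS step to kill all off-diagonal generators $\omega^{ab}$ and hence places $\Sigma^1$ within $\tilde\Sigma^1$. The one point requiring care is the bookkeeping --- each derived submodule must be tested against the correct ambient ideal, $\langle\Sigma^i\rangle$ versus $\langle\tilde\Sigma^i\rangle$ --- and that is exactly what the comparison principle manages.
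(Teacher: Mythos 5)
Your argument is correct: the monotonicity of the operator $D(U):=\{\omega\in U: d\omega\in\langle U\rangle\}$ together with the base inclusions $\Sigma^1\subseteq\tilde\Sigma^1\subseteq\Sigma^0$ (the first supplied by Proposition~\ref{di-sigma-0}) does yield the whole interleaved chain by induction, and each step checks out against the paper's recursive definitions of $\Sigma^{i+1}$ and $\tilde\Sigma^{i+1}$. The paper states the lemma without proof (deferring to \cite{DP16}), and your definition-chase is precisely the natural argument the authors intend, so there is nothing further to reconcile.
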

This lemma makes clear the computational value of the $\tilde\Sigma^i.$

The following proposition indicates the sufficient condition for degenerate solutions in the distinct eigenvalue case. This will be used to exclude the cases where there are no regular solutions.

\begin{propn}\label{degenerate-condition}
Suppose a submodule $\Sigma^f $ generates a differential ideal. If any $\omega^{a}$ is missing from $\Sigma^f$ then there is no regular solution to the inverse problem.
\end{propn}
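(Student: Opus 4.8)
The plan is to argue by contradiction using the maximal-rank requirement that must persist through the EDS process. Suppose $\Sigma^f$ generates a differential ideal but that $\omega^a = \phi^{aV}\wedge\phi^{aH}$ is missing from $\Sigma^f$ for some fixed index $a$; we aim to show that every $\omega \in \Sigma^f$ has a degenerate (non-maximal-rank) component matrix $g_{cd}$, so that no regular solution exists. The key structural input is that, by Lemma~\ref{relation-Sigma-Sigmatilde} and Proposition~\ref{di-sigma-1}, every surviving submodule is spanned by diagonal combinations $\sum_d r_d \omega^{dd}$ with $\omega^{dd} = \phi^{dV}\wedge\phi^{dH}$; in other words, all members of $\Sigma^f$ are simultaneously diagonal in the eigenbasis $\{X_d\}$ of $\mathbf\Phi$. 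This is exactly where the distinct-eigenvalue hypothesis (in force for this part of the paper) is used: it forces the $\Sigma^1$-reduction down to the diagonal module $\tilde\Sigma^1$, and all later $\tilde\Sigma^i$ and $\Sigma^i$ sit inside it.

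First I would record that, under the diagonal structure, a general element $\omega = \sum_{d} r_d\,\phi^{dV}\wedge\phi^{dH}$ of $\Sigma^f$ has associated multiplier $g_{cd} = \sum_e r_e\, \phi^e_c \phi^e_d$ by the conversion formula quoted just before the exterior-derivative computation. Since the eigenforms $\phi^e$ are linearly independent (distinct eigenvalues), the matrix $P$ with rows $\phi^e_c$ is invertible, and $\mathbf g = P^{\mathsf T} \operatorname{diag}(r_1,\dots,r_n)\, P$. Hence $\det \mathbf g = (\det P)^2 \prod_e r_e$, so $\mathbf g$ is maximal rank (invertible) at a point precisely when all $r_e$ are non-zero there. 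Equivalently, in the language of the excerpt, $\wedge^n \omega \neq 0$ iff every coefficient $r_e$ is non-zero.

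Next I would pin down what ``$\omega^a$ missing from $\Sigma^f$'' means for the coefficients. Since $\Sigma^f \subseteq \tilde\Sigma^1$ is the span of those diagonal 2-forms whose coefficient vectors $(r_1,\dots,r_n)$ satisfy the (homogeneous, pointwise-linear) conditions accumulated through the iteration — the equations of Proposition~\ref{di-sigma-1} and their successors — $\Sigma^f$ corresponds to a fixed linear subspace $W$ of coefficient vectors. To say $\omega^a \notin \Sigma^f$ is to say the $a$-th coordinate vector is not in $W$; I would argue (this is the crux) that because $W$ is cut out by equations that are homogeneous and ``index-linked'' in the sense visible in \eqref{di-sigma-1-condition} — each equation couples $r_a$ to the other $r_b$ via structure functions, with no pure monomial $r_a$ term available unless $\omega^a$ itself survives — the absence of $\omega^a$ forces a linear relation that makes $r_a$ vanish identically on every element of $W$, or else makes $W$ fail to contain any vector with all entries non-zero on a dense set. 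Either way, for every $\omega\in\Sigma^f$ the coefficient $r_a \equiv 0$ on the relevant domain (or $W$ contains only vectors with some coordinate vanishing on an open set), so $\det\mathbf g \equiv 0$ there and $\omega$ is degenerate.

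The main obstacle is the third step: making precise and rigorous the claim that dropping the generator $\omega^a$ from the diagonal module forces the corresponding coefficient to vanish in all survivors. One must rule out the possibility that $W$, while not containing $e_a$, still contains some vector with non-vanishing $a$-th entry supported on an open set; this requires examining the exact form of the reduction conditions — that the submodules $\tilde\Sigma^{i+1}\subset\tilde\Sigma^i$ are obtained by imposing $\R$-linear (indeed coordinate-hyperplane-respecting) conditions on the coefficient functions, inherited from the structure of \eqref{di-sigma-1-condition} and \eqref{curvature-condition-Sigma-1}. I expect the clean way to close this is to observe that each reduction step either keeps a basis vector $\omega^d$ or eliminates it entirely (the conditions being homogeneous of degree one in the $r$'s with coefficients that are structure functions, not involving the $r$'s), so that $\Sigma^f$ always has a basis consisting of a \emph{subset} of the $\omega^d$; then $\omega^a\notin\Sigma^f$ literally means no element of $\Sigma^f$ involves $\phi^{aV}\wedge\phi^{aH}$, hence $r_a\equiv 0$ identically for every $\omega\in\Sigma^f$, and by the determinant formula above no such $\omega$ can have maximal rank. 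Finally I would remark that the contrapositive gives the usable statement: a regular solution requires all $\omega^a$ to be retained in $\Sigma^f$, which is the exclusion criterion the paper will invoke.
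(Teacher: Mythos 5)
The paper itself states this proposition without proof (it is imported from \cite{DP16}), so your argument has to stand on its own. Your first two steps are correct and in fact already complete the intended proof: a closed $2$-form trivially satisfies $d\omega\in\langle\Sigma^i\rangle$ at every stage, so it survives each reduction and must lie in $\Sigma^f$; by Proposition~\ref{di-sigma-0} and Lemma~\ref{relation-Sigma-Sigmatilde} any such form is diagonal, $\omega=\sum_d r_d\,\omega^{dd}$; and the multiplier satisfies $\det\mathbf g=(\det P)^2\prod_d r_d$, so regularity forces every coefficient $r_d$ to be non-vanishing. What ``$\omega^a$ missing from $\Sigma^f$'' means in this paper is precisely that the coefficient $r_a$ is forced to vanish for every element of $\Sigma^f$ (see the last line of the proof of Theorem~\ref{DI-first-step-cond}, and Subcase~3 in Section~\ref{cases-in-n}, where $r_1=r_2=0$ is exactly what is called the non-existence case). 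Under that reading the conclusion follows at once from your determinant formula, and your third and fourth steps are unnecessary.

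The genuine gap is that your proposed bridge from the weaker reading ``$\omega^a\notin\Sigma^f$'' to ``$r_a\equiv 0$ on all of $\Sigma^f$'' is false. You claim that each reduction step ``either keeps a basis vector $\omega^d$ or eliminates it entirely,'' so that $\Sigma^f$ always has a basis consisting of a subset of the $\omega^d$. The paper's own Case BNII1 refutes this: there $\tilde\Sigma^2=Sp\{\omega^1+h_2\omega^2,\ \omega^3,\dots,\omega^n\}$, so neither $\omega^1$ nor $\omega^2$ belongs to $\tilde\Sigma^2$, yet $\tilde\Sigma^2$ contains elements with all $n$ coefficients non-zero and that case does admit regular solutions. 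The conditions \eqref{di-sigma-1-condition} are homogeneous and linear in the $r$'s, but they genuinely couple distinct indices (e.g.\ $r_1\tau^{1\Gamma}_2+r_2\tau^{2\Gamma}_1=0$ yields $r_2=h_2r_1$), so the surviving coefficient subspace $W$ need not be coordinate-aligned; your dichotomy ``either $r_a$ vanishes on $W$ or $W$ contains no everywhere-non-zero vector'' fails on the same example. The repair is not a sharper structural lemma but the correct interpretation of the hypothesis, after which your computation of $\det\mathbf g$ finishes the argument.
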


Now we identify one of the key factors in our classification for the inverse problem: {\em integrable eigen co-distributions}.
\begin{defn}
The eigen co-distribution $D_a^\bot=Sp\{\phi^{aV},\phi^{aH}\}$ of (copied) eigenforms of $\mathbf \Phi$ is said to be (Frobenius) integrable if
\begin{equation*}
d\phi^{aV}, d\phi^{aH} \equiv 0 \ (\text{mod } \phi^{aV}, \phi^{aH}), \label{comp-in-cond-1}
\end{equation*}
equivalently
\begin{equation}
 d\omega^{a}=\xi^a_a\wedge \omega^{a}\  (\text{no sum on $a$}),\ \text{i.e.}\ d\omega^{a} \equiv 0 \ (\text{mod } \omega^{a}) \label{comp-in-cond-2}.
\end{equation}
\end{defn}

Note that
\begin{equation}\label{dxi-aa}
d\xi^a_a \equiv 0 \quad (\text{mod } \phi^{aV}, \phi^{aH}).
\end{equation}
\begin{propn}\label{comp-in-cond}
The necessary and sufficient conditions for an eigen co-distribution $D_a^\bot=Sp\{\phi^{aV},\phi^{aH}\}$ of $\mathbf \Phi$ to be (Frobenius) integrable are:
\begin{equation*}
\tau^{a\Gamma}_b=0,\ \tau^{aV}_{bc}=0,\ \tau^{aH}_{bc}=0,\ \phi^{aV}(R(X_b^H,X_c^H))=0
\end{equation*}
for all $b, c \neq a.$
\end{propn}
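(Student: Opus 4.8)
The plan is to compute $d\phi^{aV}$ and $d\phi^{aH}$ directly from the structure equations displayed just before Proposition~\ref{identity thm} and read off what it means for these two forms to lie in the ideal generated by $\phi^{aV}$ and $\phi^{aH}$. First I would recall that a codistribution $D_a^\bot = Sp\{\phi^{aV},\phi^{aH}\}$ is Frobenius integrable precisely when $d\phi^{aV}$ and $d\phi^{aH}$ contain no term in which \emph{both} missing factors come from outside $\{\phi^{aV},\phi^{aH}\}$; equivalently, every monomial in $d\phi^{aV}$ and $d\phi^{aH}$ must carry at least one factor of $\phi^{aV}$ or $\phi^{aH}$.

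Carrying this out for $d\phi^{aH}$: the term $dt\wedge\phi^{aV}$ already contains $\phi^{aV}$, so it is harmless; the term $-\tau^{a\Gamma}_b\,dt\wedge\phi^{bH}$ with $b\neq a$ is a ``bad'' term (neither $dt$ nor $\phi^{bH}$ lies in $D_a^\bot$), forcing $\tau^{a\Gamma}_b = 0$ for $b\neq a$; in $\tau^{aH}_{cb}\,\phi^{bH}\wedge\phi^{cH}$ the only surviving monomials are those with $b\neq a$ and $c\neq a$, giving the condition on $\tau^{aH}_{bc}$ (the antisymmetrised combination; care is needed because of the antisymmetry in $b,c$, but a bad term survives iff some $\tau^{aH}_{bc}$ with $b,c\neq a$ is nonzero); and in $-\tau^{aV}_{bc}\,\phi^{bV}\wedge\phi^{cH}$ the bad monomials are those with $b\neq a$ and $c\neq a$, forcing $\tau^{aV}_{bc}=0$ there. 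For $d\phi^{aV}$ the terms $-\tau^{a\Gamma}_b\,dt\wedge\phi^{bV}$ and $-\lambda_a\,dt\wedge\phi^{aH}$ are controlled (the second contains $\phi^{aH}$; the first reproduces $\tau^{a\Gamma}_b=0$), while $\tau^{aH}_{cb}\,\phi^{bV}\wedge\phi^{cH}$, $\tau^{aV}_{cb}\,\phi^{bV}\wedge\phi^{cV}$ and $-\frac12\phi^{aV}(R(X^H_b,X^H_c))\phi^{bH}\wedge\phi^{cH}$ contribute bad monomials exactly when the relevant indices avoid $a$, producing $\tau^{aH}_{bc}=0$, $\tau^{aV}_{bc}=0$ and $\phi^{aV}(R(X^H_b,X^H_c))=0$ for $b,c\neq a$. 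Assembling these gives the four stated conditions, and conversely if all four hold then by inspection every monomial of $d\phi^{aV}$ and $d\phi^{aH}$ has a factor in $D_a^\bot$, so the codistribution is integrable; this also matches the equivalent formulation via $d\omega^a\equiv 0\pmod{\omega^a}$ since $\omega^a=\phi^{aV}\wedge\phi^{aH}$ and a short computation shows $d\omega^a$ lies in $\langle\phi^{aV},\phi^{aH}\rangle$ iff both $d\phi^{aV}$ and $d\phi^{aH}$ do.

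The only genuine subtlety — and the step I would be most careful about — is the bookkeeping for the $\phi^{bH}\wedge\phi^{cH}$ and $\phi^{bV}\wedge\phi^{cV}$ type terms: because these are already antisymmetric in $b,c$, one must verify that a ``bad'' contribution survives if and only if the \emph{antisymmetric part} of the structure function is nonzero, and then check that this is equivalent to the displayed componentwise vanishing (this uses the distinct-eigenvalue hypothesis only insofar as it guarantees the basis $\{dt,\phi^{aV},\phi^{aH}\}$ is well-defined and the labels $a$ are unambiguous). Everything else is a routine reading-off of coefficients from the structure equations, so I would present the argument as: (i) state the monomial criterion for Frobenius integrability, (ii) tabulate which monomials of $d\phi^{aV}$, $d\phi^{aH}$ are ``bad,'' (iii) equate each bad coefficient to zero to get necessity, and (iv) observe the converse is immediate.
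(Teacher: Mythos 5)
Your proposal is correct and is essentially the argument the paper intends: the paper defers the proof to \cite{DP16}, but its own definition of integrability ($d\phi^{aV},d\phi^{aH}\equiv 0\ (\text{mod } \phi^{aV},\phi^{aH})$) together with the displayed structure equations makes the result a direct read-off of the ``bad'' monomials, exactly as you carry out. The antisymmetry subtlety you flag is genuinely closed, not by the $\phi^{bH}\wedge\phi^{cH}$ and $\phi^{bV}\wedge\phi^{cV}$ terms (which only see antisymmetric parts), but by the mixed terms $\tau^{aH}_{cb}\,\phi^{bV}\wedge\phi^{cH}$ in $d\phi^{aV}$ and $-\tau^{aV}_{bc}\,\phi^{bV}\wedge\phi^{cH}$ in $d\phi^{aH}$, whose monomials are linearly independent for \emph{all} ordered pairs $(b,c)$ and therefore force the componentwise vanishing of $\tau^{aH}_{bc}$ and $\tau^{aV}_{bc}$ for $b,c\neq a$; it would be worth stating this explicitly rather than leaving it as a caveat.
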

The following important result resolves the major problem of dealing with an arbitrary number of non-integrable eigendistributions of $\Phi.$
\begin{thm}\label{DI-first-step-cond}
Let $\mathbf \Phi$ be diagonalisable with distinct eigenvalues. Suppose there are $q$ non-integrable eigen co-distributions. If the sequence $\langle \tilde \Sigma^1 \rangle, ..., \langle \tilde \Sigma^q \rangle$ does not contain a differential ideal then there is no non-degenerate solution.
\end{thm}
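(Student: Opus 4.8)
The plan is to show that if none of $\langle\tilde\Sigma^1\rangle,\dots,\langle\tilde\Sigma^q\rangle$ is a differential ideal, then the recursive process strips away at least one generator $\omega^a$ before it can terminate, so that Proposition~\ref{degenerate-condition} forces degeneracy. First I would recall from Lemma~\ref{relation-Sigma-Sigmatilde} that the two nested sequences interlace, $\tilde\Sigma^1\supseteq\Sigma^1\supseteq\tilde\Sigma^2\supseteq\Sigma^2\supseteq\cdots$, so it suffices to track the $\tilde\Sigma^i$: if the $\tilde\Sigma^i$ sequence collapses to something too small, so does the $\Sigma^i$ sequence, and conversely any genuine differential ideal in the $\Sigma$-sequence would already appear (up to the interlacing) in the $\tilde\Sigma$-sequence. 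The hypothesis is precisely that the first $q$ terms of the $\tilde\Sigma$-sequence produce no differential ideal.

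The key mechanism is a dimension-drop argument keyed to the number $q$ of non-integrable eigen co-distributions. By Proposition~\ref{comp-in-cond}, the eigen co-distribution $D_a^\bot$ is integrable exactly when the structure functions $\tau^{a\Gamma}_b,\tau^{aV}_{bc},\tau^{aH}_{bc}$ and the curvature term $\phi^{aV}(R(X_b^H,X_c^H))$ all vanish for $b,c\neq a$; so for each of the $q$ non-integrable indices at least one of the conditions in Proposition~\ref{di-sigma-1} is violated nontrivially. The plan is to argue that at each step from $\tilde\Sigma^i$ to $\tilde\Sigma^{i+1}$ the conditions \eqref{di-sigma-1-condition}, read as linear equations in the surviving coefficients $r_a$, either are automatically satisfied on the current submodule (in which case Corollary~\ref{firststep-condition}'s criterion is met and we have a differential ideal) or force a strict drop in the rank of the coefficient system. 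Since there are only finitely many generators $\omega^a$ and the non-integrability is concentrated among $q$ of them, after at most $q$ steps the process must either have reached a differential ideal — contradicting the hypothesis — or have eliminated some generator $\omega^a$ entirely. In the latter case Proposition~\ref{degenerate-condition} immediately yields that there is no regular solution, hence no non-degenerate one.

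In more detail, I would set up the inductive claim: for each $i\le q$, either $\tilde\Sigma^i$ already generates a differential ideal, or the passage to $\tilde\Sigma^{i+1}$ imposes at least one new independent linear constraint among the $r_a$ associated with a non-integrable index, reducing the number of free generators. The bookkeeping is that an integrable eigen co-distribution contributes a generator $\omega^a$ that satisfies $d\omega^a\equiv 0\ (\mathrm{mod}\ \omega^a)$ by \eqref{comp-in-cond-2} and so never obstructs membership in the ideal, whereas a non-integrable one must eventually be coupled to or cut from the others via \eqref{di-sigma-1-condition}. Carefully combining these, after $q$ steps every non-integrable direction has been "used up," so $\tilde\Sigma^{q}$ (equivalently, by the interlacing, $\Sigma^{q-1}$ or $\Sigma^q$) generates a differential ideal unless a generator has already been discarded.

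The main obstacle I anticipate is making rigorous the claim that "each step consumes exactly one unit of non-integrability" — i.e. controlling how the linear system \eqref{di-sigma-1-condition} evolves under restriction to successive submodules, and ruling out the possibility that the rank stabilises without either reaching a differential ideal or killing a generator. This requires a careful rank/linear-algebra analysis of the $\tau$- and curvature-coefficient matrices appearing in Proposition~\ref{di-sigma-1}, together with the closure identity \eqref{dxi-aa} to ensure the constraints propagate consistently; the finiteness of the process and Lemma~\ref{relation-Sigma-Sigmatilde} then close the argument.
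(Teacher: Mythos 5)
Your skeleton matches the paper's in outline --- the integrable generators persist in every $\tilde\Sigma^i$ because $d\omega^b=\xi^b_b\wedge\omega^b$, the dimension drops while no differential ideal is found, and Proposition~\ref{degenerate-condition} delivers the conclusion once a generator is lost --- but you leave the central counting step open, and you flag it yourself as your ``main obstacle.'' That obstacle is the whole content of the theorem, so as it stands the proposal has a genuine gap. Moreover, the route you propose for closing it (a rank analysis of the $\tau$- and curvature-coefficient matrices from Proposition~\ref{di-sigma-1}, with talk of each step ``consuming one unit of non-integrability'') is not how the argument closes and is considerably harder than what is needed. In particular, your worry that ``the rank stabilises without either reaching a differential ideal or killing a generator'' is vacuous: by the very definition $\tilde\Sigma^{i+1}:=\{\omega\in\tilde\Sigma^i: d\omega\in\langle\tilde\Sigma^i\rangle\}$, stabilisation $\tilde\Sigma^{i+1}=\tilde\Sigma^i$ \emph{is} the statement that $\langle\tilde\Sigma^i\rangle$ is a differential ideal. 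Also, Lemma~\ref{relation-Sigma-Sigmatilde} plays no real role here; the argument lives entirely in the $\tilde\Sigma$-sequence.

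The paper closes the gap by a pure dimension count. Under the hypothesis, none of $\langle\tilde\Sigma^1\rangle,\dots,\langle\tilde\Sigma^q\rangle$ is a differential ideal, so each inclusion $\tilde\Sigma^{p+1}\subset\tilde\Sigma^p$ is strict for $p\le q$ and the dimension drops by at least one per step, giving $\dim\tilde\Sigma^q\le n-(q-1)$. On the other hand the $n-q$ integrable generators survive every step, so $Sp\{\omega^{q+1},\dots,\omega^n\}\subseteq\tilde\Sigma^i$ for all $i$ and $\dim\tilde\Sigma^q\ge n-q$; and equality $\dim\tilde\Sigma^q=n-q$ is impossible, since then $\tilde\Sigma^q$ would coincide with the span of the integrable generators, which \emph{is} a differential ideal, contradicting the hypothesis. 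Hence $\dim\tilde\Sigma^q=n-q+1$ exactly, and one further strict drop forces $\tilde\Sigma^{q+1}=Sp\{\omega^{q+1},\dots,\omega^n\}$, so that \emph{all} of $\omega^1,\dots,\omega^q$ are missing from any final differential ideal and Proposition~\ref{degenerate-condition} rules out a non-degenerate solution. Your proposal contains the two bounding facts implicitly but never combines them into this squeeze, which is the missing idea.
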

\begin{proof}
Suppose that the eigen co-distributions are ordered so that the first $q$ are non-integrable.
 Firstly, if $\langle \tilde \Sigma^q \rangle$ is not a differential ideal, then no earlier $\langle \tilde \Sigma^p \rangle$ can be a differential ideal. Now each of the $n-q$ integrable $\omega^b:=\phi^{bV}\wedge \phi^{bH}$ has remained in $\tilde \Sigma^q$ since $d\omega^b=\xi^b_b\wedge\omega^b$. However $\langle\tilde\Sigma^q\rangle$ is not a differential ideal so that $dim(\tilde \Sigma^q)> n-q$.
Now $dim(\tilde \Sigma^{p+1})<dim(\tilde \Sigma^p)$ for $p<q+1$ and so $dim(\tilde \Sigma^q)\leq n-(q-1)$. Thus $dim(\tilde\Sigma^q) = n-q+1$. But $\langle\tilde\Sigma^q \rangle$ is not a differential ideal by assumption and hence $dim(\tilde \Sigma^{q+1})=n-q$ and so $\omega^1, ...,\omega^q$ are missing and no solution exists.
\end{proof}
\section{A new Classification Scheme}\label{classification-scheme}

By observation from the results of the differential ideal step, in particular from proposition \ref{identity thm} and theorem \ref{DI-first-step-cond}, we suggest a more practical classification compared with that of Douglas, especially for higher dimensional problems. Our classification is based on the diagonalsability of $\mathbf \Phi$ firstly, then the number of distinct eigenvalues and integrability of eigen co-distributions of $\mathbf \Phi$ and lastly the step at which a differential ideal is obtained. 

        \begin{description}
        \item[Case A] $\mathbf{\Phi}=\lambda I_n$. This is equivalent to $\langle \Sigma^0 \rangle$ being a differential ideal (see proposition \ref{identity thm}).
        \item[Case B] $\mathbf{\Phi}$ is diagonalisable with distinct real-valued eigenvalues. Further subcases will be divided according to the integrability of the lifted two-dimensional  eigen co-distributions of $\mathbf{\Phi}$ i.e. $q$ co-distributions are non-integrable and $n-q$ are integrable. According to our theorem \ref{DI-first-step-cond},
        if up to and including $\langle \tilde \Sigma^q \rangle$ there is no differential ideal, then there is no non-degenerate multiplier. Hence, for each $q$, the subcases to be considered are that a differential ideal is generated at step $1$, step $2$,..., up to step $q$. 
        \item[Case C] $\mathbf{\Phi}$ is diagonalisable with repeated eigenvalues. Further subdivision according to integrability will be  similar to case B above.
        \item[Case D] $\mathbf{\Phi}$ is not diagonalisable. Further subdivision depends on the integrability of normal form distributions of $\mathbf \Phi$.
        \end{description}

As an example, we will provide here our suggested classification for the inverse problem in dimension 2 compared with the classification of Douglas.
Firstly, if $\mathbf \Phi$ is diagonalisable with only one eigenvalue, then $\mathbf \Phi$ is the multiple of the identity which is Douglas case I. Secondly, if $\mathbf \Phi$ is diagonalisable with two distinct eigenvalues, we divide it into three subcases (recall that as it is shown in proposition \ref{identity thm} and corollary \ref{firststep-condition} that in this case $\langle\Sigma^0\rangle$ is not a differential ideal and $\tilde \Sigma^1$ is a differential ideal if and only if $\tau^{1\Gamma}_2=0$ and $\tau^{2\Gamma}_1=0$): the first subcase is where $\mathbf \Phi$ has both integrable eigen co-distributions, that is $\tau^{a\Gamma}_b=0, \tau^{aV}_{bb}=0$ for all $a\neq b$, then this corresponds to the ``seperated"  case IIa1 of Douglas; the second subcase is where $\mathbf \Phi$ has one integrable and one non-integrable eigen co-distributions and a differential ideal is found at step 1, that is $\tau^{1\Gamma}_2=0, \tau^{2\Gamma}_1=0$ and one of the $\tau^{1V}_{22}$ and $\tau^{2V}_{11}$ is non-zero, which corresponds to Douglas case IIa2 (``semi separated"); the third subcase is where $\mathbf \Phi$ has both non-integrable eigen co-distributions which is the most difficult case. We divide this case into two further subcases depending on the step at which a differential ideal is found as follows.
\begin{itemize}
 \item[1.] A differential ideal is found at step 1, that is $\tau^{1\Gamma}_2=\tau^{2\Gamma}_1=0$ and both $\tau^{1V}_{22}$ and $\tau^{2V}_{11}$ are non-zero. This corresponds to Douglas case IIa3 (``non-separated"),
 \item[2.] A differential ideal is found at step 2. This {\em may} correspond to case III of Douglas because it is not the case that both $\tau^{1\Gamma}_2$ and $\tau^{2\Gamma}_1$ are zero which then implies $[\bar \nabla \Phi,\Phi]\neq 0$.
\end{itemize}

The remaining case is where $\mathbf \Phi$ is not diagonalisable, and this corresponds to case IIb of Douglas.

For a full classification and solutions for the inverse problem in dimension 2 we refer to chapter 5 of \cite{Do16}.
\section{Case BNII}\label{cases-in-n}
Until recently, only the two easiest cases of Douglas, case I and case IIa1, had been solved in arbitrary dimension (see \cite{SaCraMa}, \cite{CPST} and \cite{AT92}). In \cite{DP16}, we investigated in details an extension of Douglas case IIa2 in arbitrary dimension $n$, where the matrix $\mathbf \Phi$ is diagonalisable with distinct eigenvalues with exactly $n-1$ co-distributions being integrable. We also gave two examples of the case where $\mathbf \Phi$ is diagonalisable with distinct eigenvalues with two non-integrable co-distributions in dimension 3 without giving any analysis. In this section we shall provide an analysis for this case.

Case BNII is where $\mathbf \Phi$ is diagonalisable with distinct eigenvalues (label `B') and has $2$ non-integrable eigen co-distributions (label `II'), in dimension $n$ (label `N'). As we will see there are 3 subcases. Without loss of generality, we assume that the eigen co-distributions are ordered with the $2$ non-integrable eigen co-distributions are $Sp\{\phi^{1V},\phi^{1H}\}$ and $Sp\{\phi^{2V},\phi^{2H}\}$, and the other $n-2$ eigen co-distributions, $Sp\{\phi^{\alpha V},\phi^{\alpha H}: \alpha=3,...,n\}$, are integrable. According to proposition \ref{identity thm}, $\langle\Sigma^0\rangle$ is not a differential ideal and the differential ideal step of EDS starts with $\tilde \Sigma^1:=Sp\{\omega^a:=\phi^{aV}\wedge\phi^{aH}:a=1,\dots,n\}$. Furthermore, according to theorem \ref{DI-first-step-cond} the problem has no solution if up to $\tilde \Sigma^2$, the differential ideal is not found. We will discuss this in a bit more detail.


Now starting with $\tilde \Sigma^1:=Sp\{\omega^a:=\phi^{aV}\wedge\phi^{aH}: a=1,\dots,n\}$ and computing $d\omega^a$ for each $a=1,\dots, n$ we have (with no sum on $a$)

\begin{equation*}
\begin{aligned}
d\omega^a=&d(\phi^{aV}\wedge\phi^{aH})=d\phi^{aV}\wedge\phi^{aH}-\phi^{aV}\wedge d\phi^{aH}\\
         =&\xi^a_a\wedge\omega^a +\xi^a_b\wedge\omega^b\\
         &-\tau^{a\Gamma}_b dt\wedge(\phi^{bV}\wedge\phi^{aH}+\phi^{aV}\wedge\phi^{bH})\\
         &-\tau^{aH}_{cb} \phi^{cH}\wedge(\phi^{bV}\wedge\phi^{aH}+\phi^{aV}\wedge\phi^{bH})\\
         &-\tau^{aV}_{cb} \phi^{cV}\wedge(\phi^{bV}\wedge\phi^{aH}+\phi^{aV}\wedge\phi^{bH})\\
         &+\frac{1}{2}\phi^{aV}(R(X_c^H,X_b^H))\phi^{bH}\wedge\phi^{cH}\wedge\phi^{aH},
\end{aligned}
\end{equation*}
for distinct $a,b,c$ and where
\begin{equation*}\label{n-def-xi}
\begin{aligned}
\xi^a_a:=&A^{aV}_{ab}\phi^{bV}+A^{aH}_{ab}\phi^{bH},\\
\xi^a_b:=&\tau^{aV}_{bb}\phi^{aV}+\tau^{aH}_{bb}\phi^{aH}.
\end{aligned}
\end{equation*}
Recall that for all $a,b,c$, $A^{aV/H}_{bc}=\tau^{aV/H}_{bc}-2\tau^{aV/H}_{cb}$.\\

Let $\omega=r_a\omega^a \in \tilde \Sigma^1$, where $\omega^a:=\phi^{aV}\wedge\phi^{aH}, a=1,\dots,n$. According to proposition \ref{di-sigma-1}, $d\omega \in \langle\tilde \Sigma^1\rangle$ if and only if the homogeneous system of equations \eqref{di-sigma-1-condition} are satisfied by the $r_a$.

With the assumption that $Sp\{\phi^{\alpha V},\phi^{\alpha H}:\alpha=3,\dots,n\}$ are all integrable, we have
$$\tau^{\alpha\Gamma}_c=0,\ \tau^{\alpha V}_{cb}=0,\  \tau^{\alpha H}_{cb}=0, \ \phi^{\alpha V}(R(X_c^H,X_b^H))=0, $$
for distinct $\alpha,c,b$ with $\alpha=3,\dots, n.$ It follows then the system \eqref{di-sigma-1-condition} is equivalent to 
\begin{equation}\label{n-di-sigma-1-condition-3}
\begin{aligned}
&r_1\tau^{1\Gamma}_2+r_2\tau^{2\Gamma}_1=0,\\
&r_1\tau^{1\Gamma}_\alpha=0, \\
&r_2\tau^{2\Gamma}_\alpha=0, \\
&r_1(\tau^{1V}_{2\alpha}-\tau^{1V}_{\alpha 2})-r_2\tau^{2V}_{\alpha 1}=0,\\
&r_1\tau^{1V}_{2\alpha}-r_2\tau^{2V}_{1\alpha}=0,\\
&r_1(\tau^{1H}_{2\alpha}-\tau^{1H}_{\alpha 2})-r_2\tau^{2H}_{\alpha 1}=0,\\
&r_1\tau^{1H}_{2\alpha}-r_2\tau^{2H}_{1\alpha}=0,\\
&r_1\phi^{1V}(R(X_2^H,X_\alpha^H))-r_2\phi^{2V}(R(X_1^H,X_\alpha^H))=0,
\end{aligned}
\end{equation}
for all $\alpha=3,\dots,n$.

 Note that the $r_1$ and $r_2$ are unknowns in the system \eqref{n-di-sigma-1-condition-3}, and they must all be non-zero for non-degenerate solutions. Let $\mathbf A_1$ denote the matrix of coefficients of the system \eqref{n-di-sigma-1-condition-3}. 
 Now the problem can be divided into three subcases depending on the rank of $\mathbf A_1$, which is $0$, $1$ and $2$.
  Subcase 1: if $rank(\mathbf A_1)=0$, then $\tilde \Sigma^1$ generates a differential ideal.
  Subcase 2: if $rank(\mathbf A_1)=1$, then the system \eqref{n-di-sigma-1-condition-3} gives a relation between the $r_1$ and $r_2$, $r_2=h_2r_1$ and so affects the dimensions of the submodule in the next step, $\tilde \Sigma^2$, that is $dim (\tilde \Sigma^2)=n-1$ and $\tilde \Sigma^2:=Sp\{\tilde \omega^1, \omega^\alpha: \alpha=3,\dots,n\}$, where $\tilde\omega^1:=\omega^1+h_2\omega^2$, and where  $h_2$ is a known function.
   Subcase 3: if $rank(\mathbf A_1)=2$, then the solutions of the system \eqref{n-di-sigma-1-condition-3} are $r_1=0$ and $r_2=0$ which is a non-existence case.

\subsection{Case BNII1}\label{section-case-NII2}
Now we analyse the subcase 2 mentioned above, where $\mathbf \Phi$ is diagonalisable with distinct (real) eigenvalues with exactly two non-integrable eigen co-distributions and $rank(\mathbf A_1)=1$. Thus a differential ideal is not obtained at the first step, that is, $\tilde\Sigma^1:=Sp\{\phi^{cV}\wedge\phi^{cH}: c=1,\dots,n\}$ does not generate a differential ideal. The results are given in theorem \ref{case-2a2-n-results-2} at the end of the section followed by illustrative examples.

Solving this system \eqref{n-di-sigma-1-condition-3} with the condition that $r_1$ and $r_2$ are non-zero for a non-degenerate solution, with the assumption that $rank(\mathbf A_1)=1$, we get an equation relating $r_1$ and $r_2$, $r_2=h_2r_1$, and the conditions on the $\tau$'s are as follows,
\begin{equation}
\label{cond-domega-in-sigma1-1}\tau^{1\Gamma}_\alpha=0, \qquad \tau^{2\Gamma}_\alpha=0, \ \text{for all } \alpha=3,\dots,n
\end{equation} and in each of equation in the system \eqref{n-di-sigma-1-condition-3}, the coefficients of $r_1$ and $r_2$ must be both non-zero or both zero. Besides, since $rank(\mathbf A_1)=1$ by assumption, at least one of the ratios
\begin{equation}
\label{cond-domega-in-sigma1-2}-\frac{\tau^{1\Gamma}_2}{\tau^{2\Gamma}_1}, \ \frac{\tau^{1V}_{2\alpha}-\tau^{1V}_{\alpha 2}}{\tau^{2V}_{\alpha 1}},\  \frac{\tau^{1V}_{2\alpha}}{\tau^{2V}_{1\alpha}},\
\frac{\tau^{1H}_{2\alpha}-\tau^{1H}_{\alpha 2}}{\tau^{2H}_{\alpha 1}},\  \frac{\tau^{1H}_{2\alpha}}{\tau^{2H}_{1\alpha}}, \ \frac{\phi^{1V}(R(X_2^H,X_\alpha^H))}{\phi^{2V}(R(X_1^H,X_\alpha^H))}
\end{equation}
for all $\alpha=3,\dots,n$, must be well-defined and non-zero and when they are non-zero, they must be equal for all  those $\alpha$. Therefore $h_2$ equals the non-zero expressions.
So we assume that the conditions \eqref{cond-domega-in-sigma1-1} and \eqref{cond-domega-in-sigma1-2} hold, we have
$$\tilde\Sigma^2:=Sp\{\tilde{\omega}^1,\omega^\alpha: \alpha=3,\dots,n\},\quad \text{where } \tilde{\omega}^1=\omega^1+h_2\omega^2.$$
Now consider the conditions for $\langle\tilde\Sigma^2\rangle$ to be a differential ideal.
Let $\omega \in \tilde \Sigma^2$, then $\omega=\tilde r_1 \tilde\omega^1+r_\alpha\omega^\alpha$, $\alpha \text{ summed } 3,\dots,n$. Calculating the exterior derivative of $\omega$, we have
\begin{align*}
d\omega=&d\tilde r_1\wedge\tilde\omega^1+\tilde r_1d \tilde\omega^1+dr_\alpha\wedge\omega^\alpha+r_\alpha d\omega^\alpha\\
       =&(d\tilde r_1+\tilde r_1 \tilde\xi^1_1)\wedge\tilde\omega^1+(dr_\alpha+\tilde r_1\tilde\xi^1_\alpha+r_\alpha\xi^\alpha_\alpha)\wedge\omega^\alpha+\tilde r_1(dh_2+\tilde\xi^1_2-h_2\tilde\xi^1_1)\wedge\omega^2,
\end{align*}
for $\alpha=3,...,n$ and where
\[\tilde\xi^1_c:=\xi^1_c+h_2\xi^2_c,\] 
  for each $c=1,\dots,n$. Thus $d\omega\in\langle\tilde\Sigma^2\rangle$ for all $\omega \in \tilde \Sigma^2$ (so $\langle\tilde\Sigma^2\rangle$ is a differential ideal) if and only if,

\begin{equation*}\label{cond-sigma2-DI}
d\tilde{\omega}^1=\tilde{\xi}^1_1 \wedge \tilde{\omega}^1+\tilde{\xi}^1_\alpha \wedge \omega^\alpha, \quad \alpha=3,\dots,n.
\end{equation*}
This condition is equivalent to
\begin{align}
\notag &dh_2+\xi^1_2+h_2(\xi^2_2-\xi^1_1-h_2\xi^2_1)\equiv 0 \quad (\text{mod }\phi^{2V},\phi^{2H})\\
\label{NII-sigma2-condition}\Leftrightarrow \  & dh_2+\xi^1_2+h_2(\xi^2_2-\xi^1_1)\equiv 0 \quad (\text{mod }\phi^{2V},\phi^{2H}),
\end{align}
as $\xi^2_1=\tau^{2V}_{11}\phi^{2V}+\tau^{2H}_{11}\phi^{2H}\equiv 0$ (mod $\phi^{2V},\phi^{2H}$).

Now let us assume that the condition \eqref{NII-sigma2-condition} holds, this means $\tilde \Sigma^2$ is the final submodule.
The next step is to find the non-degenerate and closed forms in $\tilde \Sigma^2$ by solving the system of Pfaffian equations
\begin{align}
&d\tilde r_1+\tilde r_1\tilde \xi^1_1=0 \label{n-p-1-pfaff-1}\\
&dr_\alpha+\tilde r_1 \tilde\xi^1_\alpha+r_\alpha\xi^\alpha_\alpha=-P_\alpha\phi^{\alpha V}-Q_\alpha\phi^{\alpha H} \ \text{(no sum on $\alpha$)}\label{n-p-1-pfaff-b}
\end{align}
where $\alpha=3,...,n$ and $P_\alpha, Q_\alpha$ are arbitrary functions.

Following the EDS procedure, we extend $E$ to a new manifold $N$ with coordinates $(t,x^c,u^c,r_c,P_c,Q_c)$ and now the problem is to find the integrable distributions on $N$ with $\sigma_\alpha=0, \alpha=3,\dots,n$ and $\sigma_1=0$ where
\begin{align}
\sigma_1:=&d\tilde r_1+\tilde r_1 \tilde\xi^1_1  \label{n-p-1-EDS-sigma-1}\\
\sigma_\alpha:=&dr_\alpha+\tilde r_1\tilde\xi^1_\alpha+r_\alpha\xi^\alpha_\alpha+P_\alpha\phi^{\alpha V}+Q_\alpha\phi^{\alpha H} \label{n-p-1-EDS-sigma-b}
\end{align}

Continuing the EDS process, set $\pi_\alpha^P:=dP_\alpha$ and $\pi_\alpha^Q:=dQ_\alpha$, $\alpha=3,...,n$. Using this a co-frame on $N$ is $(dt, \phi^{dV}, \phi^{dH}, \sigma_d,\pi^P_d, \pi^Q_d)$ for $d=1,...,n$. So the next step is to calculate $d\sigma_1$ and $d\sigma_\alpha$ modulo $\{\sigma_1,\sigma_\alpha: \alpha=3,...,n\}$, as follows:

Taking the exterior derivative of \eqref{n-p-1-EDS-sigma-1} and \eqref{n-p-1-EDS-sigma-b} gives:
\begin{equation*}
d\sigma_1\equiv\tilde r_1d \tilde\xi^1_1 \ (\text{mod } \sigma_1)
\end{equation*}
and, for each $\alpha=3,...,n$,
\begin{align*}
d\sigma_\alpha=&d\tilde r_1\wedge \tilde\xi^1_\alpha+\tilde r_1d \tilde\xi^1_\alpha+dr_\alpha\wedge\xi^\alpha_\alpha+r_\alpha d\xi^\alpha_\alpha\\
          &+dP_\alpha\wedge\phi^{\alpha V}+P_\alpha d\phi^{\alpha V}+dQ_\alpha\wedge\phi^{\alpha H}+Q_\alpha d\phi^{\alpha H} \ (\text{no sum on }\alpha)\\
         \equiv &-\tilde r_1\tilde\xi^1_1\wedge \tilde\xi^1_\alpha+\tilde r_1d\tilde\xi^1_\alpha\\
         &+(-\tilde r_1\tilde\xi^1_\alpha-r_\alpha\xi^\alpha_\alpha-P_\alpha\phi^{\alpha V}-Q_\alpha\phi^{\alpha H})\wedge\xi^\alpha_\alpha+r_\alpha d\xi^\alpha_\alpha\\
         &+\pi^P_\alpha\wedge\phi^{\alpha V}+P_\alpha d\phi^{\alpha V}+\pi^Q_\alpha\wedge\phi^{\alpha H}+Q_\alpha d\phi^{\alpha H} \ (\text{mod } \sigma_1, \sigma_\alpha) \ (\text{no sum on }\alpha)
\end{align*}
Now we see which terms in $d\sigma_\alpha$ can be absorbed into $\pi^P_\alpha$ and $\pi^Q_\alpha$.
Note that in each $d \sigma_\alpha$, any term that can be written as $\beta \wedge \phi^{\alpha V}$ or $\beta \wedge \phi^{\alpha H}$
can be absorbed into terms $\pi^P_\alpha \wedge \phi^{\alpha V}$ and $\pi^Q_\alpha \wedge \phi^{\alpha H}$ respectively.
After this absorption these terms are denoted as $\tilde{\pi}^P_\alpha \wedge \phi^{\alpha V}$  and $\tilde{\pi}^Q_\alpha \wedge \phi^{\alpha H}$ and the remainder that can't be absorbed represents the \textit{`torsion'} of the
system.

Recall that each eigen co-distribution $Sp\{\phi^{\alpha V},\phi^{\alpha H}\}$, $\alpha=3,\dots,n$, is integrable,
so as given at \eqref{comp-in-cond-2}-\eqref{dxi-aa}
we have that
\begin{align*}
d\phi^{\alpha V} \equiv 0, \qquad 
d\phi^{\alpha H} \equiv 0, \qquad 
d\xi^\alpha_\alpha \equiv 0 \quad (\text{mod } \phi^{\alpha V}, \phi^{\alpha H}).
\end{align*}

So the terms $d\phi^{\alpha V}$, $d\phi^{\alpha H}$ and $d\xi^\alpha_\alpha$  can be absorbed.

Thus, we have
\begin{equation*}
\begin{aligned}
d\sigma_\alpha\equiv &\tilde\pi^P_\alpha\wedge\phi^{\alpha V}+\tilde \pi^Q_\alpha\wedge\phi^{\alpha H}\\
             &+\tilde r_1\big[(\xi^\alpha_\alpha-\tilde\xi^1_1)\wedge\tilde\xi^1_\alpha+d\tilde \xi^1_\alpha\big] \ (\text{mod } \sigma_1, \sigma_\alpha)
\end{aligned}
\end{equation*}
The torsion must be zero for nontrivial solutions and enforcing non-degeneracy results in the following conditions
\begin{equation}\label{cond-torsion-vanish-1}
d\tilde\xi^1_1=0 \ \Leftrightarrow \  d(\xi^1_1+h_2\xi^2_1)=0,
\end{equation}
and
\begin{equation}\label{cond-torsion-vanish-2}
\begin{aligned}
&(\xi^\alpha_\alpha-\tilde\xi^1_1)\wedge\tilde\xi^1_\alpha+d\tilde \xi^1_\alpha\equiv 0 \ (\text{mod }\phi^{\alpha V},\phi^{\alpha H})\\
\Leftrightarrow \ &(\xi^\alpha_\alpha-\xi^1_1-h_2\xi^2_1)\wedge(\xi^1_\alpha+h_2\xi^2_\alpha)+d(\xi^1_\alpha+h_2\xi^2_\alpha)\equiv 0 \ (\text{mod }\phi^{\alpha V},\phi^{\alpha H})
\end{aligned}
\end{equation}
for each $\alpha=3,...,n$.

If we assume that all conditions \eqref{cond-domega-in-sigma1-1}, \eqref{cond-domega-in-sigma1-2}, \eqref{NII-sigma2-condition}, \eqref{cond-torsion-vanish-1} and \eqref{cond-torsion-vanish-2} are satisfied, we have
\begin{align*}
 &d\sigma_1\equiv 0 \quad (\text{mod }\sigma)\\
&d\sigma_\alpha\equiv \tilde{\pi}^P_\alpha\wedge \phi^{\alpha V}+\tilde{\pi}^Q_\alpha \wedge \phi^{\alpha H} \quad (\text{mod }\sigma)
\end{align*}
We change the basis $\{\phi^{cV},\phi^{cH}\}$  to the basis $\{\gamma^{cV},\gamma^{cH}\}$ using:
\begin{align*}
\gamma^{1V/H}=\phi^{1V/H}+\phi^{2V/H}+...+\phi^{nV/H}\\
\gamma^{dV/H}=\phi^{1V/H}-\phi^{dV/H}, \quad d=2,...,n
\end{align*}
We then get the optimal tableau:
\begin{center}
\renewcommand{\arraystretch}{1.25}
$\tilde \Pi = $
\begin{tabular}{c|c c c c c c c c c c  }
           & $\gamma^{1V}$  & $\gamma^{1H}$  & ...& $\gamma^{pV}$ & $\gamma^{pH}$ &$\gamma^{(p+1)V}$ & $\gamma^{(p+1)H}$  &...& $\gamma^{nV}$ & $\gamma^{nH}$  \\ \hline
$\sigma_1$ &0&0& ...    &     0   &0    & 0   & 0&...& 0   & 0  \\
$\sigma_3$ &     $\tilde \pi^P_3$     &     $\tilde \pi^Q_3$     & ... &  0  & 0    & $-\tilde \pi^P_3$   &   $-\tilde \pi^Q_3$&...& $0$   & $0$  \\
\vdots     &    \vdots   &  \vdots     &  ...&  \vdots     & \vdots    &  \vdots &  \vdots&...&  \vdots &  \vdots \\
$\sigma_n$ &     $\tilde \pi^P_n$     &     $\tilde \pi^Q_n$     &      ...    &      $0$    & $0$    & $0$   & $0$ &...&  $-\tilde \pi^P_n$  &  $-\tilde \pi^Q_n$
\end{tabular}
\end{center}
This tableau gives Cartan characters: $s_1=n-2$, $s_2=n-2$, $s_i=0$ for $i=3,...,n$.

The final step is to check for involution. To do this, we let $t$ be the number of ways that $\tilde \pi^P_\alpha$ and $\tilde\pi^Q_\alpha$ can be altered such that \eqref{NII-dsigma-b} are unchanged. It can be seen that if we write:
\begin{align*}
\bar\pi^{P}_\alpha&=\tilde\pi^{P}_\alpha+f_\alpha^1\phi^{\alpha V}+f_\alpha^2\phi^{\alpha H},\\
\bar\pi^{Q}_\alpha&=\tilde\pi^{Q}_\alpha+f_\alpha^3\phi^{\alpha H}+f_\alpha^2\phi^{\alpha V},\\
\end{align*}
then \eqref{NII-dsigma-b} would be unchanged if we replace $\tilde \pi^{P/Q}_\alpha$ by $\bar \pi^{P/Q}_\alpha$.
Thus for each $\alpha =3,...,n$ we have three degrees of freedom in modifying $\tilde\pi^{P}_\alpha$ and $\tilde\pi^{Q}_\alpha$, giving $3(n-2)$ degrees of freedom for all $\tilde \pi^{P/Q}_\alpha$.
Therefore in this case, $t=3(n-2)$, which equal to $s_1+2s_2$ as required for involution.
So the solution depends on $n-2$ functions of two variables in this case.

In summary, the result of this case is given in the following theorem.

\begin{thm}\label{case-2a2-n-results-2}
Assume that $\mathbf \Phi$ is diagonalisable with distinct (real) eigenvalues and with exactly $2$ non-integrable eigen co-distributions.
Suppose that eigen co-distributions are ordered such that $Sp\{\phi^{1V},\phi^{1H}\}$ and $Sp\{\phi^{2V},\phi^{2H}\}$ are non-integrable.
Suppose further that $\langle\tilde\Sigma^1\rangle$ where $\tilde\Sigma^1:=Sp\{\phi^{cV}\wedge\phi^{cH}:c=1,\dots,n\}$ is not a differential ideal.
Then the necessary and sufficient for the existence of a solution for the associated inverse problem are that
the conditions \eqref{cond-domega-in-sigma1-1}, \eqref{cond-domega-in-sigma1-2}, \eqref{NII-sigma2-condition}, \eqref{cond-torsion-vanish-1} and \eqref{cond-torsion-vanish-2} hold.
Furthermore, the solution (if it exists) depends on $n-2$ arbitrary functions of $2$ variables each.
\end{thm}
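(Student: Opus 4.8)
The plan is to run the three-step EDS procedure described above to completion. The running discussion has already carried out most of the computation, so the proof amounts to organising it into a necessity/sufficiency argument, with the Cartan--K\"ahler involution count as the last and most delicate ingredient.

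\emph{Step 1: the differential ideal.} I would start the process from $\tilde\Sigma^1$, which by hypothesis is not a differential ideal, so $rank(\mathbf A_1)\ge 1$. If $rank(\mathbf A_1)=2$ the only solution of \eqref{n-di-sigma-1-condition-3} is $r_1=r_2=0$, which by Proposition~\ref{degenerate-condition} leaves no regular solution; hence a regular solution forces $rank(\mathbf A_1)=1$, and the rank-one condition together with $r_1,r_2\neq 0$ translates into \eqref{cond-domega-in-sigma1-1}--\eqref{cond-domega-in-sigma1-2}. This gives $r_2=h_2r_1$ and $\tilde\Sigma^2=Sp\{\tilde\omega^1,\omega^\alpha:\alpha=3,\dots,n\}$ with $\dim\tilde\Sigma^2=n-1$; demanding $\langle\tilde\Sigma^2\rangle$ closed reduces, using $\xi^2_1\equiv 0\ (\text{mod }\phi^{2V},\phi^{2H})$, to \eqref{NII-sigma2-condition}, and then $\tilde\Sigma^2=\Sigma^f$. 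Any further shrinkage would, by Lemma~\ref{relation-Sigma-Sigmatilde}, drop $\omega^1$ or $\omega^2$, so by Proposition~\ref{degenerate-condition} these three conditions are necessary. (Although $\dim\Sigma^f=n-1<n$, $\tilde\omega^1$ carries both non-integrable pairs, so $\wedge^n\omega\neq 0$ is still attainable.)

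\emph{Step 2: the Pfaffian system and torsion.} The closed non-degenerate forms in $\tilde\Sigma^2$ correspond to the integral manifolds of the Pfaffian system \eqref{n-p-1-pfaff-1}--\eqref{n-p-1-pfaff-b}. I would prolong $E$ to $N$ with coordinates $(t,x^c,u^c,r_c,P_c,Q_c)$ and the $1$-forms $\sigma_1,\sigma_\alpha$ of \eqref{n-p-1-EDS-sigma-1}--\eqref{n-p-1-EDS-sigma-b}, then compute $d\sigma_1,d\sigma_\alpha$ modulo $\{\sigma_1,\sigma_\alpha\}$. Using the integrability relations $d\phi^{\alpha V},d\phi^{\alpha H},d\xi^\alpha_\alpha\equiv 0\ (\text{mod }\phi^{\alpha V},\phi^{\alpha H})$ and absorbing every term of the form $\beta\wedge\phi^{\alpha V}$ or $\beta\wedge\phi^{\alpha H}$ into $\tilde\pi^P_\alpha,\tilde\pi^Q_\alpha$, the surviving torsion is built from $d\tilde\xi^1_1$ and from $(\xi^\alpha_\alpha-\tilde\xi^1_1)\wedge\tilde\xi^1_\alpha+d\tilde\xi^1_\alpha$. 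Its vanishing, forced by non-degeneracy, is \eqref{cond-torsion-vanish-1}--\eqref{cond-torsion-vanish-2}, so these too are necessary.

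\emph{Step 3: Cartan--K\"ahler.} Assuming all five conditions, the reduced system is $d\sigma_1\equiv 0$ and $d\sigma_\alpha\equiv\tilde\pi^P_\alpha\wedge\phi^{\alpha V}+\tilde\pi^Q_\alpha\wedge\phi^{\alpha H}$ modulo the $\sigma$'s. After the change to the basis $\{\gamma^{cV/H}\}$ these decouple and display the optimal tableau $\tilde\Pi$, with reduced Cartan characters $s_1=s_2=n-2$ and $s_i=0$ for $i\ge 3$. Counting the modifications $\tilde\pi^{P/Q}_\alpha\mapsto\tilde\pi^{P/Q}_\alpha+(\cdots)\phi^{\alpha V}+(\cdots)\phi^{\alpha H}$ that leave these equations unchanged gives three free functions per $\alpha$, hence $t=3(n-2)=s_1+2s_2$, which is Cartan's involutivity test. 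So the system is in involution; Cartan--K\"ahler then yields the integral manifolds, equivalently the closed non-degenerate $\omega=\sum r_{ab}\omega^{ab}$ and the multipliers $g_{cd}=r_{ab}\phi^a_c\phi^b_d$, and the last nonzero character $s_2=n-2$ gives the stated generality of $n-2$ functions of $2$ variables, which proves sufficiency. The crux of the whole proof is this last step: one must be sure the torsion absorption is complete, so that $\tilde\Pi$ is genuinely optimal and the $s_i$ are as claimed, and then check involution via the honest count $t=\sum_i i\,s_i$ rather than a naive dimension count; a secondary subtlety is confirming $\tilde\Sigma^2=\Sigma^f$ once \eqref{NII-sigma2-condition} holds, which leans on Lemma~\ref{relation-Sigma-Sigmatilde} and Proposition~\ref{degenerate-condition}.
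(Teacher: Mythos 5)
Your proposal is correct and follows essentially the same route as the paper, whose ``proof'' of Theorem~\ref{case-2a2-n-results-2} is precisely the running analysis of Section~\ref{section-case-NII2}: the rank analysis of $\mathbf A_1$ yielding \eqref{cond-domega-in-sigma1-1}--\eqref{cond-domega-in-sigma1-2}, the closure condition \eqref{NII-sigma2-condition} for $\tilde\Sigma^2$, the torsion conditions \eqref{cond-torsion-vanish-1}--\eqref{cond-torsion-vanish-2}, and the tableau with $s_1=s_2=n-2$ and $t=3(n-2)=s_1+2s_2$ giving involution and the generality count. Your explicit appeals to Proposition~\ref{degenerate-condition} and Lemma~\ref{relation-Sigma-Sigmatilde} to justify necessity, and your remark that $\tilde\omega^1=\omega^1+h_2\omega^2$ keeps non-degeneracy attainable despite $\dim\Sigma^f=n-1$, merely make explicit what the paper leaves implicit.
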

\begin{xmpl}
This is a non-existence example of the case B3II1 as the condition for $\langle\tilde \Sigma^2\rangle$ to be a differential ideal \eqref{NII-sigma2-condition} fails. \\
We consider the following system
\begin{equation}\label{n-NII-xmpl-1}
\ddot{x}=x\dot{y}, \
\ddot{y} =\dot{x},\
\ddot{z}=0,
\end{equation}
on an appropriate domain. Denoting $\dot x,\dot y,\dot z$  by $u,v,w$, we find that
$\mathbf \Phi$ is diagonalisable with distinct eigenvalues and corresponding re-scaled eigenvectors $X_a$ as follows,
\begin{eqnarray*}
\lambda_1=-\frac{x}{4} &\quad\text{and}\quad& X_1=(\frac{u}{\sqrt[4]{v^3}},\sqrt[4]{v},0), \\
\lambda_2=-\frac{4v+x}{4} &\quad\text{and}\quad& X_2=(\frac{1}{\sqrt[4]{v}},0,0),  \\
\lambda_3=0 &\quad\text{and}\quad& X_3=(0,0,1).
\end{eqnarray*}
The structure functions $\tau$'s are zero except for
\[
\tau^{1\Gamma}_{2}=-\frac{\sqrt{v}}{4v},\ \tau^{2\Gamma}_{1}=-\frac{3u^2}{4v\sqrt{v}},\ \tau^{1H}_{11}=\frac{u}{8v\sqrt[4]{v^3}},\ \tau^{1V}_{11}=\frac{1}{2\sqrt[4]{v^3}},\ \tau^{1H}_{21}=\frac{1}{8v\sqrt[4]{v}}\]
\[\tau^{2H}_{11}=\frac{2xv^2-u^2}{2v^2\sqrt[4]{v}},\ \tau^{2V}_{11}=\frac{-u}{v\sqrt[4]{v}},\ \tau^{2H}_{12}=\frac{-u}{8v\sqrt[4]{v^3}},\ \tau^{2V}_{12}=-\frac{1}{2\sqrt[4]{v^3}}\]
\[\tau^{2V}_{22}=-\frac{1}{8v\sqrt[4]{v}},\ \phi^{2V}(R(X_1^H,X_2^H))=-\sqrt[4]{v}\]

These results show that the two eigen co-distributions $Sp\{\phi^{1V}, \phi^{1H}\}$ and $Sp\{\phi^{2V}, \phi^{2H}\}$ are non-integrable and the third one is integrable by proposition \ref{comp-in-cond} and $\langle\Sigma^1\rangle$ is not a differential ideal by corollary \ref{firststep-condition} and that the conditions \eqref{cond-domega-in-sigma1-1} and \eqref{cond-domega-in-sigma1-2} hold with $$h_2=-\frac{\tau^{1\Gamma}_2}{\tau^{2\Gamma}_1}=-\frac{v}{3u^2}.$$ Further examination is whether or not the condition \eqref{NII-sigma2-condition} holds.
Calculations gives
$$dh_2=d(-\frac{v}{3u^2})\equiv \frac{2xv^2-u^2}{3u^3}dt+\frac{x(4v^2-u^3)}{6u^5\sqrt[4]{v^3}}\phi^{1H} \quad (\text{mod }\phi^{2V},\phi^{2H}),$$
\begin{align*}
\xi^2_2=&A^{2V}_{21}\phi^{1V}+A^{2H}_{21}\phi^{1H}+A^{2V}_{23}\phi^{3V}+A^{2H}_{23}\phi^{3V}\\
        =&\frac{1}{\sqrt[4]{v^3}}\phi^{1V}+\frac{u}{4v\sqrt[4]{v^3}}\phi^{1H},
\end{align*}
\begin{align*}
\xi^1_1=&A^{1V}_{12}\phi^{2V}+A^{1H}_{12}\phi^{2H}+A^{1V}_{13}\phi^{3V}+A^{1H}_{13}\phi^{3V}\\
        =&-\frac{1}{\sqrt[4]{v}}\phi^{2H},\\
\xi^1_2       =&0.
\end{align*}
Thus, the condition \eqref{NII-sigma2-condition} does not hold and so $\langle\tilde\Sigma^2\rangle$ is not a differential ideal.  Therefore, the corresponding inverse problem of this system of second-order ordinary differential equations \eqref{n-NII-xmpl-1} has no regular solutions.
\end{xmpl}
\begin{xmpl}
We consider another example of the subcase BNII1 analysed above. This B3II1 system was introduced by us in \cite{DP16},
\begin{equation*}
\ddot{x}=zt, \
\ddot{y} = 0,\
\ddot{z}=x,
\end{equation*}
on an appropriate domain.
Denoting the derivatives by $u,v,w$, we find that
$\mathbf \Phi$ is diagonalisable with distinct eigenvalues and corresponding eigenvectors $X_a$ as follows,
\begin{eqnarray*}
\lambda_1=\sqrt{t} &\quad\text{and}\quad& X_1=(-\sqrt{t},0,1), \\
\lambda_2=-\sqrt{t} &\quad\text{and}\quad& X_2=(\sqrt{t},0,1),  \\
\lambda_3=0 &\quad\text{and}\quad& X_3=(0,1,0).
\end{eqnarray*}
The structure functions $\tau$'s are zero except for

\[
\tau^{1\Gamma}_{1}=\tau^{2\Gamma}_{2}=-\tau^{1\Gamma}_{2}=-\tau^{2\Gamma}_{1}=\frac{1}{4t}.\]

These results show that the eigen co-distributions $Sp\{\phi^{1V}, \phi^{1H}\}$ and $Sp\{\phi^{2V}, \phi^{2H}\}$ are non-integrable and the third one is integrable by proposition \ref{comp-in-cond}. Also $\langle\tilde \Sigma^1\rangle$ is not a differential ideal by corollary \ref{firststep-condition}, and that the conditions \eqref{cond-domega-in-sigma1-1} and \eqref{cond-domega-in-sigma1-2} hold with $h_2=-1$. Further examination gives
\[d\tilde{\omega}^1=-\frac{1}{2t} dt \wedge \tilde{\omega}^1, \quad \tilde{\omega}^1=\omega^1-\omega^2,\]
that is, the condition \eqref{cond-sigma2-DI} holds with $\tilde{\xi}^1_1=-\frac{1}{2t} dt$ and $\tilde{\xi}^1_3=0$ and so $\tilde \Sigma^2:=Sp\{\tilde{\omega}^1, \omega^3\}$ generates a differential ideal. The remaining conditions \eqref{cond-torsion-vanish-1} and \eqref{cond-torsion-vanish-2} also hold for solution as $\tilde{\xi}^1_1=-\frac{1}{2t} dt$ and $\tilde{\xi}^1_3=0$. Therefore this system is variational and  the solution depends on one arbitrary function of two variables.

To determine the explicit expression of the Cartan two-form for this example, 
we examine the Pfaffian equations \eqref{n-p-1-pfaff-1} and \eqref{n-p-1-pfaff-b}. Explicitly, in this example, they are
 \begin{align*}
 &d\tilde r_1+\tilde r_1\tilde\xi^1_1=0,\\
 &dr_3+P_3\phi^{3V}+Q_3\phi^{3H}=0
 \end{align*}
 We then find that $\tilde r_1=G\sqrt{t}$ where $G$ is a constant and $r_3=r_3(u_3^1,u_3^2)$ is an arbitrary function of two variables $u^1_3=y-vt$ and $u^2_3=v$. Thus the Cartan 2-form finally is \[\omega=G\sqrt{t}(\omega^1-\omega^2)+r_3(u_3^1,u_3^2)\omega^3. \]
\end{xmpl}

In the next section, we will present the results for subcase 1 of case BNII in which the rank of the system \eqref{n-di-sigma-1-condition-3} is zero, that is,  $\langle\tilde \Sigma^1\rangle$ is differential ideal.

\subsection{Case BNII0}
This is  the case where $\mathbf \Phi$ is diagonalisable with distinct eigenvalues, two non-integrable
eigen co-distributions, $n-2$ integrable eigen co-distributions and $rank(A_1)=0$, that is $\tilde\Sigma^1:=Sp\{\omega^a:a=1,\dots,n\}$ generates a
differential ideal.  In the case $n=2$, this corresponds to the most difficult case of Douglas, case IIa3, and  not entirely complete in his paper.
Again, we assume that $Sp\{\phi^{1V},\phi^{1H}\}$ and $Sp\{\phi^{2V},\phi^{2H}\}$ are non-integrable
and $Sp\{\phi^{\alpha V},\phi^{\alpha H}: \alpha=3, \dots, n\}$ are integrable.

A full analysis can been found in~\cite{Do16}, we restrict ourselves to stating an abbreviated version of the main result and an example in $n=3$. The `certain conditions' referred to in the theorem below correspond for this case to the conditions in theorem
\ref{case-2a2-n-results-2} for case BNII1.
\begin{thm}\label{last thm}
 Assume that $\mathbf \Phi$ is diagonalisable with distinct eigenvalues having 2 non-integrable eigen co-distributions, $n-2$ integrable co-distributions and $rank(A_1)=0$.
 The existence of solutions to the inverse problem depends on whether or not certain conditions (see~\cite{Do16}) are satisfied. The solution (if it exists) depends on $n-2$ functions of two variables.
\end{thm}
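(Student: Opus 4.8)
The plan is to follow exactly the EDS machinery already set up for case BNII1, since case BNII0 differs only at the very first differential-ideal step: instead of $\operatorname{rank}(\mathbf A_1)=1$ producing a relation $r_2=h_2r_1$ and a drop in dimension, we have $\operatorname{rank}(\mathbf A_1)=0$, so $\langle\tilde\Sigma^1\rangle$ is already a differential ideal and $\dim\tilde\Sigma^1=n$. First I would observe that this is the genuine generalisation of Douglas case IIa3, and that $\tilde\Sigma^1=Sp\{\omega^1,\omega^2,\omega^\alpha:\alpha=3,\dots,n\}$ with both $Sp\{\phi^{1V},\phi^{1H}\}$ and $Sp\{\phi^{2V},\phi^{2H}\}$ non-integrable but $d\omega^1,d\omega^2$ each lying in the ideal generated by all of $\omega^1,\dots,\omega^n$. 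The key point is that proposition~\ref{degenerate-condition} is not triggered here: no $\omega^a$ is missing, so a regular solution is not excluded a priori and the Pfaffian/Cartan--K\"ahler stage must be carried out.

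Next I would set up the Pfaffian system. Writing the closed two-form as $\omega=r_1\omega^1+r_2\omega^2+r_\alpha\omega^\alpha$ and computing $d\omega$ using the formula for $d\omega^a$ displayed before theorem~\ref{case-2a2-n-results-2}, the condition $d\omega=0$ becomes a system of one-form equations for $dr_1,dr_2,dr_\alpha$ analogous to \eqref{n-p-1-pfaff-1}--\eqref{n-p-1-pfaff-b}, but now with two ``coupled'' equations (for $r_1$ and $r_2$) in place of the single equation for $\tilde r_1$, together with $n-2$ decoupled equations for the integrable block $r_\alpha$. One then prolongs $E$ to a manifold $N$ with the extra fibre coordinates $P_\alpha,Q_\alpha$ (and whatever ``torsion-absorbing'' functions are forced by the $\omega^1,\omega^2$ coupling), forms the $\sigma$'s as in \eqref{n-p-1-EDS-sigma-1}--\eqref{n-p-1-EDS-sigma-b}, and computes $d\sigma\pmod{\sigma}$. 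The integrability conditions arising here are precisely the ``certain conditions'' referred to in the statement, the analogues of \eqref{cond-domega-in-sigma1-1}, \eqref{cond-domega-in-sigma1-2}, \eqref{NII-sigma2-condition}, \eqref{cond-torsion-vanish-1} and \eqref{cond-torsion-vanish-2}; when they fail there is no solution, and when they hold the torsion can be absorbed.

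Finally, to get the count ``$n-2$ functions of two variables'' I would exhibit the optimal tableau after the change of basis $\gamma^{1V/H}=\sum_c\phi^{cV/H}$, $\gamma^{dV/H}=\phi^{1V/H}-\phi^{dV/H}$, exactly as in case BNII1. The rows corresponding to $\sigma_1$ and $\sigma_2$ should contribute no new Cartan characters (the two-dimensional non-integrable block is rigid: its part of the Pfaffian system is determined, as in example~5.2 where $\tilde r_1=G\sqrt t$ is fixed up to a constant), while each integrable $\sigma_\alpha$ contributes a row $(\tilde\pi^P_\alpha,\tilde\pi^Q_\alpha,0,\dots,-\tilde\pi^P_\alpha,-\tilde\pi^Q_\alpha,\dots)$, giving $s_1=s_2=n-2$ and $s_i=0$ for $i\ge3$; counting the $3(n-2)$ ways of modifying the $\tilde\pi^{P/Q}_\alpha$ by multiples of $\phi^{\alpha V},\phi^{\alpha H}$ shows $t=3(n-2)=s_1+2s_2$, so the system is in involution and the solution depends on $s_2=n-2$ functions of two variables. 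The main obstacle I anticipate is the first paragraph's claim in disguise: verifying that the coupled $(r_1,r_2)$-subsystem really does close up without forcing either $r_1$ or $r_2$ to vanish and without introducing additional Cartan characters — this is where the non-integrability of the two eigen co-distributions interacts with the curvature terms $\phi^{1V}(R(X_2^H,X_\alpha^H))$ and $\phi^{2V}(R(X_1^H,X_\alpha^H))$, and it is exactly the delicate point that made Douglas's treatment of case IIa3 incomplete. For the full verification one defers to~\cite{Do16}.
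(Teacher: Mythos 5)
The paper offers no proof of this theorem at all --- it explicitly defers the full analysis to~\cite{Do16} and only states the abbreviated result --- so the only available benchmark is the worked template of case BNII1 in the preceding subsection, and your outline follows that template faithfully: same prolongation, same absorption of torsion using integrability of the $n-2$ co-distributions, same tableau and character count $s_1=s_2=n-2$, $t=3(n-2)=s_1+2s_2$, and the same identification of the coupled $(r_1,r_2)$ block as the source of the unspecified ``certain conditions.'' Your approach is essentially the paper's (and the thesis's) intended one, with the genuinely delicate point --- that the non-integrable block closes up rigidly without contributing Cartan characters --- correctly flagged rather than glossed over.
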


\begin{xmpl}\label{ex-3AIII20-1}
This example is a straightforward modification of a case B2II0 example where the added equation produces an integrable eigen co-distribution.
Consider the system
\begin{equation*}
\ddot{x}=x\dot{z}, \
\ddot{y} = x, \
\ddot{z}=x
\end{equation*}
on an appropriate domain. Again denoting the derivatives by $u,v,w$, we find
\[
\mathbf \Phi = \left( \begin{array}{ccc} -w & 0 & \frac{u}{2} \\
-1 & 0 & 0 \\
-1&0&0 \end{array}\right)
\]
 is diagonalisable with distinct eigenvalues and corresponding eigenvectors $X_a$ chosen so that $\hnabla_\Gamma X_a^V=0$:
\begin{eqnarray*}
\lambda_1=\sqrt{-2u+w^2}-w &\quad\text{and}\quad& X_1=(-\sqrt{-2u+w^2}+w,2,2), \\
\lambda_2=-\sqrt{-2u+w^2}-w &\quad\text{and}\quad& X_2=(\sqrt{-2u+w^2}+w,2,2),  \\
\lambda_3=0 &\quad\text{and}\quad& X_3=(0,1,0).
\end{eqnarray*}

The non-zero functions $\tau^{aV}_{bc}$ and $\tau^{aH}_{bc}$ are

\[
\tau^{1V}_{11}=-\tau^{2V}_{11}=\frac{\sqrt{-2u+w^2}-w}{2(2u-w^2)}, \quad \tau^{1H}_{11} =-\tau^{2H}_{11}=\frac{x}{2(2u-w^2)},\]
\[\tau^{1V}_{12}=-\tau^{2V}_{12}=\frac{3\sqrt{-2u+w^2}+w}{2(2u-w^2)}, \quad \tau^{1H}_{12}=-\tau^{2H}_{12}=\frac{-x}{2(2u-w^2)},\]
\[\tau^{1V}_{21}=-\tau^{2V}_{21}=\frac{3\sqrt{-2u+w^2}-w}{2(2u-w^2)}, \quad \tau^{1H}_{21}=-\tau^{2H}_{21}=\frac{x}{2(2u-w^2)},\]
\[\tau^{1V}_{22}=-\tau^{2V}_{22}=\frac{\sqrt{-2u+w^2}+w}{2(2u-w^2)}, \quad \tau^{1H}_{22}=-\tau^{2H}_{22}=\frac{-x}{2(2u-w^2)}.\]

These results show that the eigen co-distributions $Sp\{\phi^{1V}, \phi^{1H}\}$ and $Sp\{\phi^{2V}, \phi^{2H}\}$ are
non-integrable and the third one is integrable and $\langle\tilde \Sigma^1\rangle$ is differential ideal.
 Furthermore, the existence conditions of theorem~\ref{last thm} are also satisfied. So, the solution depends on one arbitrary function of two variables.

\end{xmpl}

\section*{Acknowledgements}
We thank Willy Sarlet for useful discussions and his continuing interest.

\end{document}